\newtheorem{theorem}{Theorem}
\newtheorem{lemma}{Lemma}
\numberwithin{lemma}{section}
\newtheorem{question}{Question}
\newtheorem*{claim}{Claim}
\newtheorem{reduction}{Reduction Lemma}
\newenvironment{claimproof}{%
\noindent%
\textit{Proof of Claim.}%
}
{
\hfill $\triangle$%
\medskip
}
\let\leq\leqslant
\let\geq\geqslant
\let\setminus\smallsetminus
\newcommand{\floor}[1]{{\left\lfloor #1 \right\rfloor}}
\let\old@setaddresses\@setaddresses
\def\@setaddresses{\bgroup\parindent 0pt\let\scshape\relax\old@setaddresses\egroup}
\begin{document}

\title{Chromatic number of ordered graphs with forbidden ordered subgraphs.}

\author{Maria Axenovich}
\author{Jonathan Rollin}
\author{Torsten Ueckerdt}
\affil{Department of Mathematics, Karlsruhe Institute of Technology} 

\maketitle
 
\begin{abstract}
 It is well-known that the graphs not containing a given graph $H$ as a subgraph have bounded chromatic number if and only if $H$ is acyclic.
 Here we consider \emph{ordered graphs}, i.e., graphs with a linear ordering $\prec$ on their vertex set, and the 
 function
 \[
  f_\prec(H) = \sup\{\chi(G)\mid G\in {\rm Forb}_\prec(H)\}, 
 \]
where ${\rm Forb}_\prec(H)$ denotes the set of all ordered graphs that do not contain a copy of $H$.
 
%
%
 If $H$ contains a cycle, then as in the case of unordered graphs, $f_\prec(H) = \infty$. However, in contrast to the unordered graphs,   we describe an infinite family of ordered forests $H$ with $f_\prec(H) = \infty$.
  An ordered graph is crossing if there are two edges $uv$ and $u'v'$ with $u\prec u'\prec v\prec v'$.
 For connected crossing ordered graphs $H$ we reduce the problem of determining whether $f_\prec(H) \neq \infty$ to a family of so-called monotonically alternating trees.
For non-crossing $H$ we prove that $f_\prec(H) \neq \infty$ if and only if $H$ is acyclic and does not contain a copy of any of  the five special ordered forests on four or five vertices, which we call bonnets.
 For such forests $H$, we show that $f_\prec (H) \leq 2^{|V(H)|}$ and that $f_\prec (H) \leq 2|V(H)|-3$ if $H$ is connected. 
\end{abstract}

{\bf Keywords:} {ordered graphs, chromatic number, forbidden subgraphs}
  
\section{Introduction}

What conclusions can one make about the chromatic number of a graph knowing that it does not contain certain subgraphs?
Let $H$ be a graph on at least two vertices, ${\rm Forb}(H)$ be the set of all graphs not containing $H$ as a subgraph, and 
 $f(H) = \sup\{\chi(G)\mid G\in {\rm Forb}(H)\}.$
If $H$ has a cycle of length $\ell$, then for any integer $\chi$ there is a graph $G$ of girth at least $\ell+1$ and chromatic number $\chi$, see  \cite{LargeGirthHighChromatic}, 
implying   that $f(H)=\infty$.
On the other hand, if $H$ is a forest  on $k$ vertices and $G$ is a graph of chromatic number at least $k$, then $G$ contains a $k$-critical subgraph $G'$, that in turn has minimum degree at least $k-1$. Thus a copy of $H$ can be found as a subgraph of $G'$ by a greedy embedding.
Therefore $G\not\in {\rm Forb}(H)$, implying that $f(H) \leq k-1$.
So, we see that $f(H)$ is finite if and only if $H$ is acyclic.

A similar situation holds for directed graphs, with a similarly defined function $f_{\rm dir}(H)$ being finite if and only if the underlying graph of $H$ is acyclic.
A result of Addalirio-Berry \textit{et al.}~\cite{A-B}, see also~\cite{BurrDirectedSubgraphs}, implies that $f_{{\rm dir}}(H)\leq k^2/2 - k/2 -1 $ whenever $H$ is a directed $k$-vertex graph whose underlying graph is acyclic.\\

Here, we consider the behavior of the chromatic number of ordered graphs with forbidden ordered subgraphs. 
An \emph{ordered graph} $G$ is a graph $(V,E)$ together with a linear ordering $\prec$ of its vertex set $V$.
An \emph{ordered subgraph} $H$ of an ordered graph $G$ is a subgraph of the (unordered) graph $(V,E)$ together with the linear ordering of its vertices inherited from $G$.
An ordered subgraph $H$ is a \emph{copy of an ordered graph} $H'$ if there is an order preserving isomorphism between $H$ and $H'$.
For an ordered graph $H$ on at least two vertices\footnote{If $H$ has only one vertex, then ${\rm Forb}_\prec(H)$ consists only of the graph with empty vertex set and one can think of $f_\prec(H)$ as being equal to $0$. However, we will avoid this pathologic case throughout.} let ${\rm Forb}_\prec(H)$ denote the set of all ordered graphs that do not contain a copy of $H$.
We consider the function $f_\prec$ given by
\[
 f_\prec(H) = \sup\{\chi(G)\mid G\in {\rm Forb}_\prec(H)\}.
\]
 
We show that it is no longer true that $f_\prec(H)$ is finite if and only if $H$ is acyclic.  
When $H$ is connected, we reduce the problem of determining whether $f_\prec(H) \neq \infty$ to a well behaved class of trees, which we call monotonically alternating trees.
We completely classify  so-called ``non-crossing" ordered graphs $H$ for which $f_\prec(H)  = \infty$.
In case of   ``non-crossing"  $H$ with finite $f_\prec(H)$, we provide specific upper bounds on this function in terms of the number of vertices in $H$.
Note that $f_\prec(H)\geq |V(H)|-1$ for any ordered graph $H$, since a complete graph on $|V(H)|-1$ vertices is in  ${\rm Forb}_\prec(H)$.

\medskip
We need some formal definitions before stating the main results of the paper.
We consider the vertices of an ordered graph laid out along a horizontal line according to their ordering $\prec$  and say that 
for $u\prec v$  the vertex $u$ is to the \emph{left of} $v$ and the vertex $v$ is to the \emph{right of} $u$. 
We write $u\preceq v$ if $u\prec v$ or $u=v$.
For two sets of vertices $U$ and $U'$ we write $U\prec U'$ if all vertices in $U$ are left of all vertices in $U'$.
Two edges $uv$ and $u'v'$  \textbf{\textit{cross}} if $u\prec u'\prec v\prec v'$ and an ordered graph $H$ is called \emph{crossing} if it contains two crossing edges.
Otherwise, $H$ is called \emph{non-crossing}.
Two distinct ordered graphs $G$ and $H$ \emph{cross each other} if there is an edge in $G$ crossing an edge in $H$.\\

An ordered graph is a \textbf{\textit{bonnet}} if it has $4$ or $5$ vertices $u_1\prec u_2\preceq u_3\prec u_4\preceq u_5$ and edges $u_1u_2, u_1 u_5, u_3u_4$, or if it has vertices $u_1 \preceq u_2\prec u_3 \preceq u_4 \prec u_5$ and edges $u_1u_5, u_4u_5, u_2u_3$.
See Figure~\ref{fig:BonnetTutte} (first two rows).  
An ordered path $P=u_1,\ldots,u_n$ is a \textbf{\textit{tangled path}} if for a vertex $u_i$, $1<i<n$, that is either leftmost or rightmost in $P$ there is an edge in the subpath $u_1,\ldots,u_i$ that crosses an edge in  the subpath $u_i,\ldots,u_n$.
See Figure~\ref{fig:BonnetTutte} (last row, left and middle).
Note that there are crossing paths which are not tangled, see for example Figure~\ref{fig:BonnetTutte} (right).

\begin{figure}
 \centering
 
 \hfill
 \begin{minipage}{0.30\textwidth}
  \centering
  \includegraphics{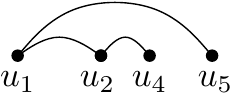}
 \end{minipage}
 \hfill
 \begin{minipage}{0.30\textwidth}
  \centering
  \includegraphics{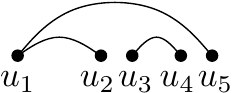}
 \end{minipage}
 \hfill
 \begin{minipage}{0.30\textwidth}
  \centering
  \includegraphics{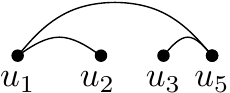}
 \end{minipage}
 \hfill
 \\[10pt]
 \hfill
 \begin{minipage}{0.30\textwidth}
  \centering
  \includegraphics{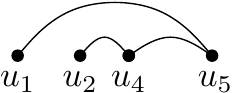}
 \end{minipage}
 \hfill
 \begin{minipage}{0.30\textwidth}
  \centering
  \includegraphics{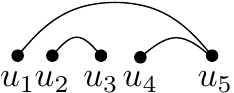}
 \end{minipage}
 \hfill
 \begin{minipage}{0.30\textwidth}
  \centering
  \includegraphics{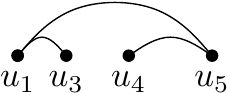}
 \end{minipage}
 \hfill
 \\[10pt]
 \hfill
 \begin{minipage}{0.30\textwidth}
  \centering
  \includegraphics{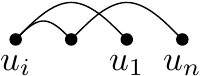}
 \end{minipage}
 \hfill
 \begin{minipage}{0.30\textwidth}
  \centering
  \includegraphics{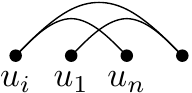}
 \end{minipage}
 \hfill
 \begin{minipage}{0.30\textwidth}
  \centering
  \includegraphics{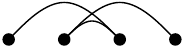}
 \end{minipage}
 \hfill

 \caption{All bonnets (first two rows), two tangled paths (last row, left and middle) and a crossing path that is not tangled (last row, right).}
 \label{fig:BonnetTutte}
\end{figure}


\begin{theorem} \label{Tutte-Shift}
 If an ordered graph $H$ contains a cycle, a bonnet, or a tangled path, then $f_\prec(H) = \infty$.
\end{theorem}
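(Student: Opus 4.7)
The theorem combines three independent assertions, one per obstruction $H$ may contain. In each case the plan is to construct, for every $k$, an ordered graph of chromatic number at least $k$ that avoids the obstruction (a cycle of length at most $|V(H)|$, a bonnet, or a tangled path), and therefore avoids $H$. Once the three sub-claims are established, the theorem follows by taking the sup.

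The cycle case is standard and reduces to the unordered setting: by Erd\H{o}s's theorem on graphs of large girth and chromatic number, there is an unordered graph $G$ with $\chi(G)\geq k$ and girth exceeding $|V(H)|$, so any linear order on $V(G)$ yields an ordered graph in ${\rm Forb}_\prec(H)$, as every $|V(H)|$-vertex subgraph is acyclic.

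For the bonnet and tangled-path cases the plan is to exhibit an explicit ordered construction of unbounded chromatic number that structurally avoids the configuration. A natural candidate is the shift graph $S_n$ on vertex set $\{(i,j)\colon 1\leq i<j\leq n\}$ with edges $(i,j)(j,k)$ for $i<j<k$, whose chromatic number is $\lceil\log_2 n\rceil$, equipped with a carefully chosen linear order of its vertices. One wants the order to satisfy: (i) any two vertices sharing a common neighbour lie close enough that no independent edge can fit strictly between them, which prevents the ``independent edge in the middle'' that both bonnet types require; and (ii) every path in $S_n$ is monotone in an appropriate coordinate, so that no interior vertex of a path can simultaneously be extremal and incident to edges crossing edges from the opposite half, which precludes tangling.

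The main obstacle is the design of the linear order, together with the ensuing verification. The na\"ive lexicographic order on $V(S_n)$ already admits bonnets --- for instance $(1,10),(2,3),(3,4),(5,10),(10,20)$ in lex order realise a bonnet of the second type, since $(1,10)(10,20)$, $(5,10)(10,20)$ and $(2,3)(3,4)$ are all edges of $S_n$ --- so the order must be tuned to the bonnet and tangled-path patterns, or a different high-chromatic ordered construction must be used (possibly two separate constructions, one per case). Once a suitable order is fixed, both verifications reduce to a finite case analysis exploiting the rigid coordinate structure of $S_n$, namely that any two adjacent vertices share exactly one coordinate and that this coordinate is forced to be extremal in one of them.
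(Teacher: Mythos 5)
Your cycle case is fine and matches the paper. The gap is in the other two cases, which are exactly where the content of the theorem lies: you never produce a working construction, only desiderata for an order on the shift graph, and you yourself observe that the obvious (lexicographic) order fails. What you miss for bonnets is that you do not need one ordered graph avoiding \emph{all} bonnets: since $H$ contains a specific bonnet $B$ and ${\rm Forb}_\prec(B)\subseteq{\rm Forb}_\prec(H)$, it suffices to beat that one $B$, and since $f_\prec(B)=f_\prec(\overline{B})$ one may assume $B$ is of the first type (leftmost vertex adjacent to its successor and to the rightmost vertex, plus a middle edge). For that type the lexicographically ordered shift graph does work, by the coordinate argument you allude to: edges $uv$ and $uw$ with $u\prec v\preceq x,y\preceq w$ force $v_1=w_1=u_2$, hence $x_1=y_1=v_1$, while the edge $xy$ forces $x_2=y_1=x_1$, a contradiction. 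Your counterexample $(1,10),(2,3),(3,4),(5,10),(10,20)$ is a correct observation about second-type bonnets, but the reversal symmetry disposes of it; without that observation your bonnet case is left open, since you give no tuned order and it is not even clear one avoiding both bonnet types exists.

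The tangled-path case is a more serious failure of the proposed route. Your hoped-for property that some order makes every path of the shift graph ``monotone'' is not available: already under the lexicographic order the path $(1,5),(5,9),(9,10),(2,9)$ is tangled (its rightmost vertex $(9,10)$ is interior, and the edge $(1,5)(5,9)$ crosses $(2,9)(9,10)$), and you offer no alternative order nor any verification scheme. The paper does something entirely different here: for a \emph{minimal} tangled path $P$ it builds graphs $G_k$ recursively by the Descartes--Tutte construction (many ordered copies of $G_{k-1}$, followed by a large vertex set $V$, with order-preserving matchings from each copy into the $n$-subsets of $V$), shows $\chi(G_k)\geq k$ by pigeonhole, and shows $G_k$ avoids $P$ by an induction exploiting minimality: the rightmost vertex of a purported copy must lie in $V$, and Lemma~\ref{lem:edgeCoversVertex} forces both halves of the path into single blocks $U_i$, $U_j$, so the two halves cannot cross. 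None of this (minimality, the recursive construction, the ordering of the matchings) is present or replaceable by a finite case analysis on shift graphs, so as written the proposal does not prove the theorem for tangled paths.
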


A vertex $v$ of an ordered graph $G$ is called \emph{inner cut vertex}, if there is no edge $uw$ with $u\prec v\prec w$ in $G$ and $v$ is not leftmost or rightmost in $G$.
An \emph{interval} in an ordered graph $G$ is a set $I$ of vertices such that for all vertices $u,v\in I$, $x\in V(G)$ with $u\prec x\prec v$ we have $x\in I$.
A  \textbf{\textit{segment}} of an ordered graph $G$ with $|V(G)|\geq 2$ is an induced subgraph $H$ of $G$ such that $|V(H)|\geq 2$, $V(H)$ is an interval in $G$, the leftmost and rightmost vertices in $H$ are either inner cut vertices of $G$ or leftmost respectively rightmost in $G$, and all other vertices in $H$ are not inner cut vertices in $G$.
So, $G$ is the union of its segments, any two segments share at most one vertex and the inner cut vertices of~$G$ are precisely the vertices contained in two segments of $G$.
In particular, the number of inner cut vertices of $G$ is exactly one less than the number of its segments. See Figure \ref{fig:segments}.

\begin{figure}
 \centering
 \includegraphics{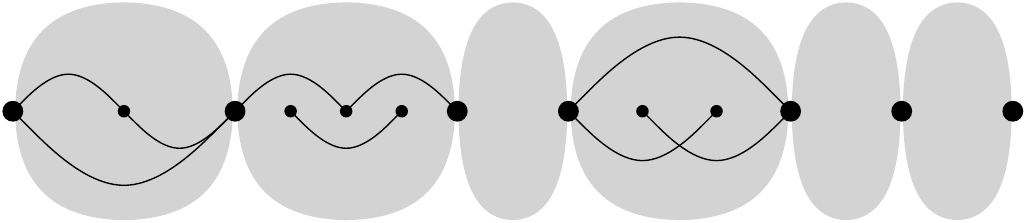}
 \caption{Segments of an ordered graph. The bold vertices are either inner cut-vertices or left-, rightmost vertices.}
 \label{fig:segments}
\end{figure}

The \emph{length} of an  edge $xy$ is the number of vertices $v$ such that $x\preceq v\prec y$.
A shortest edge among all the edges incident to a vertex $x$ is referred to as a \emph{shortest edge incident to} $x$. Note that there is either   $1$ or $2$  shortest edges incident to a given vertex in a 
connected graph on at least two vertices. 
Let $U$ be a vertex set in an ordered tree $T$, such that each vertex in $U$ has exactly one shortest edge incident to it. 
For such a set $U$, let  $S(U)$  be the set of edges $e_u$ such that   $e_u$ is a shortest edge incident to $u$, $u\in U$. 
  We call an ordered tree $T$ 
 \textbf{\textit{monotonically alternating}} if there is a partition $V(T)=L\dot\cup R$, with $L\prec R$, such that $L$ and $R$ are independent sets in $T$, $E=S(L)\cup S(R)$, and neither $S(L)$ nor $S(R)$ contains a pair of crossing edges.

\begin{figure}
 \centering
 \includegraphics{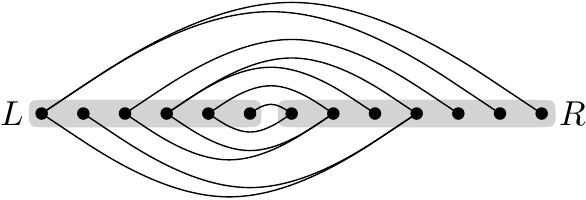}
 \caption{A monotonically alternating tree. Each edge on top is the shortest edge incident to a vertex in $R$ and each edge at the bottom is the shortest edge incident to a vertex in $L$.}
 \label{fig:monoAlt}
\end{figure}

\begin{theorem}\label{structural}
 An ordered tree $T$  contains neither a bonnet nor a tangled path if and only if each segment of $T$ is monotonically alternating.
 In particular if $f_\prec(H)\neq\infty$ for some connected ordered graph $H$, then each segment in $H$ is a monotonically alternating tree.
\end{theorem}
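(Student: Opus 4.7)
The plan is to deduce the ``in particular'' clause from Theorem~\ref{Tutte-Shift} (if $f_\prec(H)\neq\infty$ then $H$ contains no cycle, no bonnet and no tangled path, so being connected and acyclic it is a tree and the equivalence applies), and then to establish the equivalence. As a preliminary reduction, I would first observe that any bonnet of $T$, and the two crossing edges that witness a tangled path in $T$, must be contained in a single segment of $T$: the ``long'' edge of a bonnet (and each of the two crossing edges of a tangled path) spans every position in its interval, so any putative inner cut vertex strictly inside would be crossed by it, contradicting the definition of an inner cut vertex. It is therefore enough to prove, for a single segment $S$, that $S$ is monotonically alternating if and only if $S$ contains no bonnet and no tangled path.

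For the direction ``monotonically alternating $\Rightarrow$ no bonnet, no tangled path'' I would argue directly from the bipartition $V(S)=L\dot\cup R$, $L\prec R$, with $L, R$ independent. For a bonnet of type~1 with edges $u_1u_2, u_1u_5, u_3u_4$, the edge $u_1u_2$ forces $u_1\in L$ and $u_2\in R$, and since $L\prec R$ every vertex $\succeq u_2$ lies in $R$, making $u_3, u_4, u_5\in R$ and so $u_3u_4$ violates $R$-independence; type~2 is symmetric. For a tangled path with (say) leftmost interior vertex $v_i$, both path-neighbours of $v_i$ lie strictly to the right of $v_i$, forcing $v_i\in L$; the path then alternates in the bipartition, and the no-crossing conditions on $S(L), S(R)$ together with the fact that each $L$-vertex attaches to its leftmost $R$-neighbour (and dually) rule out any crossing between a left-subpath edge and a right-subpath edge.

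For the direction ``no bonnet, no tangled path $\Rightarrow$ monotonically alternating'' I would fix a segment $S$ and proceed in three stages. (A) No interior vertex $v$ of $S$ has both a left- and a right-neighbour: picking such neighbours $u\prec v\prec w$, and using that $v$ is not an inner cut vertex to obtain an edge $yz$ in $S$ with $y\prec v\prec z$, a case analysis on the placement of $y, z$ relative to $u, v, w$ always yields either a bonnet (for example the triple $uv, uz, vw$ when $y=u$ and $z\succ w$) or a tangled path (for example the path $z, u, v, w$ when $y=u$ and $z\prec w$, with leftmost interior vertex $u$ and crossing $zu\times vw$). This gives a partition $V(S)=L\cup R$, with $L$ (respectively $R$) the vertices whose neighbours all lie to the right (left). (B) $L\prec R$: otherwise I would pick $r^\star\in R, \ell^\star\in L$ with $r^\star\prec\ell^\star$ minimising their tree-distance in $S$, which is at least $3$ (lengths $1, 2$ are immediate to rule out), and a case analysis along the path yields an explicit bonnet; for example, for tree-distance exactly $3$ the path $r^\star, p_1, p_2, \ell^\star$ satisfies $p_1\prec r^\star\prec \ell^\star\prec p_2$ and produces the bonnet $p_1r^\star, p_1p_2, p_2\ell^\star$. (C) Distinct positions give unique shortest edges; any edge $vw$ missed by $S(L)\cup S(R)$ would admit a closer right-neighbour $w'$ of $v$ and a closer left-neighbour $v'$ of $w$, and case-splitting on the order of $v', w'$ produces a bonnet; two crossing edges within $S(L)$ (or $S(R)$) are ruled out by the same technique, using an interior-crossing edge at the right endpoint of the inner crossing edge.

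The main obstacle will be the case analyses in stages (A) and (C): in each, one has a small edge configuration together with an arbitrary witnessing edge, and every placement of the witness must be reduced to an explicit bonnet or tangled path. The individual arguments are elementary, but the enumeration of subcases must be carried out carefully.
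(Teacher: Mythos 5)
Your top-level plan (reduce to a single segment, then prove the segment-level equivalence) matches the paper's, but the way you propose to execute the hard direction does not go through. In stage (A) you claim that every placement of the witness edge $yz$ (with $y\prec v\prec z$) relative to $u,v,w$ yields an \emph{explicit} bonnet or tangled path; this is only true in the lucky subcases where $yz$ shares an endpoint with $\{u,w\}$, which are exactly the subcases you list. If $yz$ is vertex-disjoint from $\{u,w\}$, say $u\prec y\prec v\prec z\prec w$ or $y\prec u\prec v\prec w\prec z$, then the three edges $uv,vw,yz$ contain neither a bonnet (every bonnet has two edges sharing a vertex that is leftmost or rightmost among its vertices, while here the only shared vertex is $v$, which is neither) nor a tangled path (the three edges do not even form a path). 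A forbidden configuration does exist, but only after following the tree path that connects $yz$ to $\{u,v,w\}$: for instance in the tree on $u\prec y\prec v\prec z\prec w$ with edges $uv,vw,yz,zw$ the tangled path is $u,v,w,z,y$, and it needs the extra edge $zw$. Since that connecting path can be arbitrarily long, no bounded case analysis on the three named edges can close these cases; this is precisely why the paper proves this direction by induction on $|V|$, supported by Lemma~\ref{lem:edgeCoversVertex} (an edge $uw$ confines connecting paths to the interval $[u,w]$) and the leaf-removal Lemmas~\ref{lem:noNewInnerCut} and~\ref{lem:goodTreeBipartite}. The same objection applies to stage (C): two crossing edges of $S(L)$, together with an edge spanning an interior vertex (which may simply be the other crossing edge), form neither a bonnet nor a tangled path, so ``the same technique'' has nothing to bite on; here too the paper's contradiction is obtained only through the inductive leaf-removal argument. (Minor: in your distance-$3$ example in (B) and in the missed-edge case of (C), the forced order is $v\prec v'\prec w'\prec w$ and what you actually get there is a tangled path such as $w',v,w,v'$, not a bonnet; the contradiction survives, but it shows how delicate these checks are.)

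There is also a gap in the easy direction and in your segment reduction. For ``monotonically alternating $\Rightarrow$ no tangled path'' you argue that the crossing-freeness of $S(L)$ and $S(R)$ plus the shortest-edge property ``rule out any crossing between a left-subpath edge and a right-subpath edge''; but a crossing between an $S(L)$-edge and an $S(R)$-edge is perfectly possible in a monotonically alternating tree (that is the whole point of the definition), so these local facts alone rule out nothing. One must genuinely use that the leftmost (or rightmost) vertex of the path is interior and joins the two crossing edges; the paper again does this by induction, deleting an extreme leaf $u$ and showing that the edge at the rightmost vertex of a putative tangled path would be a shortest edge at its left endpoint, so that it and $uv$ are two crossing edges of $S(L)$. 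Finally, your reduction to a single segment only places the two crossing edges of a tangled path inside one segment; the path itself could a priori leave that segment, so you still owe an argument (it exists: the subpath between the two crossing edges through the extreme witness vertex cannot leave the segment's interval, since re-entering would force the path to visit a delimiting inner cut vertex twice), and as stated your ``it is therefore enough'' is not yet justified for tangled paths.
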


Recall that an ordered graph is non-crossing if it does not contain any crossing edges.
Note that a non-crossing graph does not contain tangled paths.

\begin{theorem}\label{non-crossing}
 Let $T$ be a non-crossing ordered graph on $k$ vertices.
 Then $f_\prec(T)\neq\infty$ if and only if $T$ is a forest that does not contain a bonnet.
 
 Moreover, if $f_\prec(T)\neq\infty$ then $k-1\leq f_\prec(T)\leq 2^k$.
 If, in addition $T$ is connected, then $f_\prec(T)\leq 2k-3$.
 Finally, for each $k\geq 4$ there is an ordered non-crossing tree $T$ with $k\leq f_\prec(T)\neq\infty$, while for $k=2,3$ we have $f_\prec(T) = k-1$.
\end{theorem}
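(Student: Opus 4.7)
For the equivalence and the trivial lower bound, the only if direction is immediate from Theorem~\ref{Tutte-Shift}: if $T$ contains a cycle or a bonnet then $f_\prec(T)=\infty$, and since $T$ is non-crossing it cannot contain a tangled path, so no other obstruction from that theorem is relevant. The lower bound $f_\prec(T)\geq k-1$ holds for every ordered graph $T$ on $k$ vertices because $K_{k-1}\in{\rm Forb}_\prec(T)$.

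For the main upper bound $f_\prec(T)\leq 2^k$, I would induct on $k$. The plan is to identify a vertex $v$ of $T$ whose deletion still leaves a non-crossing bonnet-free forest and which admits a canonical extension step; a natural choice is a leaf that is leftmost or rightmost in some segment of $T$. Given an ordered graph $G$ with $\chi(G)>2^k$, I would pass to a critical subgraph $G'$ with minimum degree at least $2^k$, apply the inductive hypothesis to locate a copy of $T-v$ in a suitable substructure, and then extend it by selecting an image of $v$ adjacent to the image of its unique neighbor in $T$ and sitting in the correct order position. The factor $2$ at each inductive step reflects the cost of controlling chromatic number on both sides of some chosen vertex; concretely I would expect to use a lemma of the form: any ordered graph of chromatic number $>c$ admits a vertex whose left- or right-neighborhood has chromatic number $>c/2$.

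For the stronger bound $f_\prec(T)\leq 2k-3$ in the connected case, I would invoke Theorem~\ref{structural}: a connected non-crossing bonnet-free tree is itself monotonically alternating. This much more rigid structure permits a linear embedding strategy in which the vertices of $T$ are placed one at a time following a natural traversal of the alternating tree; at each step only one previously placed neighbor must be matched in a prescribed position, so the chromatic-number cost accumulates additively rather than multiplicatively, yielding the linear bound $2k-3$ after careful accounting.

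For the tightness statements, $k=2$ corresponds to a single edge forced by $\chi\geq 2$, and for $k=3$ there are only a handful of non-crossing forests, each of which is easily shown to be contained in every graph of chromatic number $2$. For $k\geq 4$ I would construct an explicit ordered tree $T_k$ together with an ordered graph $G_k\in{\rm Forb}_\prec(T_k)$ of chromatic number $\geq k$; a natural candidate is a shift-graph-type construction on ordered vertices, chosen so that its canonical proper colorings force any would-be copy of $T_k$ to collapse. The main obstacle I anticipate is the inductive upper bound: choosing the corner vertex so that any embedding of $T-v$ really does extend, together with the matching chromatic-number splitting lemma, is the technical heart of the argument, and pinning down the precise structural claim about where $v$ can be removed from a non-crossing bonnet-free forest without losing the forbidden-pattern properties is where I would expect the main case analysis.
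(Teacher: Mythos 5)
The easy parts of your outline (the ``only if'' direction via Theorem~\ref{Tutte-Shift} and the lower bound $f_\prec(T)\geq k-1$ via $K_{k-1}$) match the paper, but the technical heart --- the upper bounds --- is missing, and the route you sketch for it would fail. Passing to a critical subgraph of minimum degree $2^k$ and extending a copy of $T-v$ greedily is precisely the unordered argument, and it cannot work for ordered graphs: bonnets are forests, yet $f_\prec(\text{bonnet})=\infty$, so no argument that only exploits large degree/chromatic number to place vertices one at a time can succeed; the order constraints are exactly what breaks the greedy extension, and you never supply the mechanism by which ``non-crossing and bonnet-free'' rescues it. Moreover, your auxiliary lemma (``any ordered graph of chromatic number $>c$ has a vertex whose left- or right-neighborhood has chromatic number $>c/2$'') is false: in a triangle-free graph of huge chromatic number every neighborhood is an independent set. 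The paper instead proves the bound by reductions on $T$ itself: split $T$ at an inner cut vertex (bounds add), delete an isolated vertex (factor $2$, by taking every other vertex of $G$), delete an isolated edge joining the leftmost and rightmost vertex (factor $2$ plus $1$, via a greedy interval partition of $G$), or delete a \emph{reducible} leaf (factor $2$: the set of longest edges to the left forms a $1$-degenerate graph whose removal kills all copies of $T-u$); the case analysis shows a non-crossing bonnet-free forest always admits one of these moves, giving $2^k$. Your treatment of the connected case also contains an error: a connected non-crossing bonnet-free tree need not be monotonically alternating (a monotone path on three vertices has interval chromatic number $3$); only its segments are. The actual linear bound comes from Lemma~\ref{lem:ReducMonAlt}, which shows that any $G$ avoiding a single-segment (monotonically alternating, non-crossing) tree edge-decomposes into $k-2$ $1$-degenerate graphs by iterating the reducible-leaf reduction, combined with the inner-cut-vertex reduction when $T$ has several segments; your ``place the vertices one at a time with additive cost'' is not an argument.

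The tightness construction for $k\geq 4$ is also not going to come from a shift-graph-type family: if arbitrarily large members of such a family avoided $T_k$, that would prove $f_\prec(T_k)=\infty$, contradicting what you need. The paper takes $T$ to be a monotonically alternating path and exhibits one fixed graph $G$ on $2k$ vertices built from four $(k-1)$-cliques sharing vertices plus one extra edge, arranged so that the long edge of any copy of $T$ forces all of $T$ into a single $(k-1)$-clique, while a coloring argument gives $\chi(G)\geq k$. (Also, for $k=3$ the correct statement is that $T$ appears in every graph of chromatic number at least $3$, not $2$.) So the proposal identifies the right targets but lacks the reduction lemmas and the explicit construction that constitute the proof.
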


 For certain classes of ordered forests we prove better upper bounds on $f_\prec$.
 A \emph{$k$-nesting} is an ordered graph $T$ on vertices $u_1\prec\cdots\prec u_k\prec v_k\prec\cdots\prec v_1$ and edges $u_iv_i$, $1\leq i\leq k$.
 A \emph{$k$-crossing} is an ordered graph $T$ on vertices $u_1\prec\cdots\prec u_k\prec v_1\prec\cdots\prec v_k$ and edges $u_iv_i$, $1\leq i\leq k$. 
 We may omit the parameter $k$ if it is not important. 
 A \emph{generalized star} is a union of a star and isolated vertices.

 The following theorem summarizes several results on trees which are either not covered by Theorem~\ref{non-crossing} or improve the upper bound from Theorem~\ref{non-crossing} significantly.
 
One of the known classes of such graphs is a special family of star forests, or, in other words, tuple matchings.
For positive integers $m$ and $t$ and a permutation $\pi$ of $[t]$, an {\it $m$-tuple $t$-matching}  $M=M(t,m, \pi)$ is an ordered graph with vertices $v_1\prec\cdots\prec v_{t(m+1)}$, where each edge is of the form $v_iv_{t+j+m(\pi(i)-1)}$ for $1\leq i\leq t$, $1\leq j\leq m$.
I.e., an $m$-tuple $t$-matching is a vertex disjoint union of $t$ stars on $m$ edges each, where $v_1,\ldots,v_t$ are the centers of the stars that are to the left of all leaves and the leaves of each star form an interval in $M$, so that these intervals are ordered according to the permutation $\pi$. The third item in the following theorem is an immediate corollary 
of a result by Weidert~\cite{Weidert} who provides a linear upper bound on the   the extremal function for $M$.
The other results are based on linear upper bounds for  the extremal functions of nestings due to Dujmovic and Wood~\cite{DW04}, on the extremal function of crossings due to Capoyleas and Pach~\cite{CapoyleasPach} and lower bounds for ordered Ramsey numbers due to Conlon \textit{et al.}~\cite{ConlonFoxLeeSudakov}, see also Balko et al. \cite{BalkoCibulkaKralKyncl}.
See Section~\ref{connections} for a more detailed description of extremal functions and ordered Ramsey numbers.

\begin{theorem}\label{others} 
 Let $T$ be an ordered forest on $k$ vertices.
 \begin{itemize}
  \item If each segment of $T$  is either a generalized star, a $2$-nesting, or a $2$-crossing, then $f_\prec(T) = k-1$.
  
  \item If each segment of $T$ is either a nesting, a crossing, a generalized star, or a non-crossing tree without bonnets, then $k-1\leq f_\prec(T)\leq  2k-3$.
  
  \item If $T$ is a tuple matching, then $k-1\leq f_\prec(T) \leq  2^{10k\log(k)}$.
  
  \item There is a positive constant $c$ such that for each even  positive integer $k\geq 4$ there is a matching $M$ on $k$ vertices with  $f_\prec(M) \geq 2^{c\frac{\log(k)^2}{\log\log(k)}}$.
 \end{itemize}
 \end{theorem}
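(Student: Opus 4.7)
The plan is to treat the four bullets separately, in every case invoking the standard principle that a linear bound on the ordered extremal function yields a bound on $f_\prec$ via degeneracy: if $\mathrm{ex}_\prec(n, H) \leq c \cdot n$ for all $n$, then every $H$-free ordered graph is $2c$-degenerate and hence has chromatic number at most $2c+1$. The lower bound $f_\prec(T) \geq k-1$ in the first three bullets is immediate, since $\chi(K_{k-1}) = k-1$ and $K_{k-1}$ is too small to contain $T$.

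For the first two bullets I would exploit the segment decomposition of $T$ directly. Given $G$ with $\chi(G) \geq k$, pass to a $k$-critical subgraph $G'$ of minimum degree at least $k-1$ and embed the segments of $T$ from left to right, gluing consecutive segments at their shared inner cut vertex. At each step the current cut vertex has high degree on the relevant side (the rightmost vertex of $G'$ has all its neighbors to the left, and symmetrically for the leftmost). Each allowed segment type admits a short greedy-embedding lemma: for generalized stars this is immediate from the minimum-degree bound; for a $2$-nesting or a $2$-crossing one only needs a single additional edge among the many side-neighbors, which is forced by chromatic number at least $3$; for the larger segment types in bullet two one combines the linear extremal bounds of Dujmovic and Wood~\cite{DW04} for nestings, of Capoyleas and Pach~\cite{CapoyleasPach} for crossings, the trivial bound for generalized stars, and the bound $f_\prec \leq 2s-3$ from Theorem~\ref{non-crossing} for a non-crossing bonnet-free segment on $s$ vertices. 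Aggregating these segment-wise bounds along the inner cut vertices of $T$ gives the sharp $f_\prec(T) = k-1$ in bullet one and $f_\prec(T) \leq 2k-3$ in bullet two.

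For bullet three, Weidert~\cite{Weidert} provides a linear extremal bound $\mathrm{ex}_\prec(n, M) \leq c(k) \cdot n$ with $c(k) \leq 2^{O(k \log k)}$ for any $m$-tuple $t$-matching $M$ on $k$ vertices, so the degeneracy principle above delivers $f_\prec(M) \leq 2^{10 k \log k}$. For bullet four I would invoke the ordered Ramsey lower bound of Conlon, Fox, Lee, and Sudakov~\cite{ConlonFoxLeeSudakov}: there is a matching $M$ on $k$ vertices with $r_\prec(M; 2) \geq 2^{c (\log k)^2 / \log\log k}$. A $2$-coloring of $K_n$ for $n = r_\prec(M; 2) - 1$ witnessing this bound partitions its edges into two $M$-free ordered graphs $G_1$ and $G_2$; since $\chi(G_1) \cdot \chi(G_2) \geq \chi(G_1 \cup G_2) = \chi(K_n) = n$, one of them has chromatic number at least $\sqrt{n}$, whence $f_\prec(M) \geq 2^{(c/2)(\log k)^2 / \log\log k}$.

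The main obstacle is bullet one, where the bound $k-1$ is tight and no slack is available: the segment-by-segment embedding must succeed exactly, without the constant-factor loss inherent in the generic degeneracy argument. This forces a careful case analysis for each of the three allowed segment types and careful bookkeeping of where each inner cut vertex of $T$ lands in $G'$. Bullet two then re-runs the same framework with looser constants, while bullets three and four reduce cleanly to the cited extremal and ordered Ramsey results.
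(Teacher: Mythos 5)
Your treatment of the last two bullets is essentially the paper's: a linear ordered extremal bound gives bounded degeneracy and hence bounded chromatic number, and the lower bound comes from the ordered Ramsey construction of Conlon \emph{et al.} via the product-coloring argument; the only slip there is attributing the linear bound for general $m$-tuple $t$-matchings entirely to Weidert, whereas Weidert covers $m\leq 2$ and the paper passes to general $m$ via Tardos's reduction ${\rm ex}_\prec(n,M(t,m,\pi))\leq 2^{t(m-2)}{\rm ex}_\prec(n,M(t,2,\pi))$ from~\cite{Tardos}. The genuine gap is in your plan for the first two bullets. Passing to a $k$-critical subgraph $G'$ of minimum degree at least $k-1$ and greedily embedding the segments of $T$ from left to right is the classical \emph{unordered} argument, and it does not transfer to ordered graphs: minimum degree says nothing about where in the ordering the neighbors lie, which is exactly why $f_\prec$ can be infinite even for forests. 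Concretely, an ordered graph with no $2$-nesting can have up to $2n-3$ edges (Dujmovic--Wood~\cite{DW04}), so minimum degree $3$ does not force a $2$-nesting at all, let alone one whose leftmost vertex is a previously fixed cut-vertex image; likewise ``chromatic number at least $3$'' forces an odd cycle, not a crossing or nesting in a prescribed position. Moreover, when you glue segment $i+1$ at the image of the shared cut vertex, nothing in your argument guarantees that this image has even a single neighbor to its right, since it was chosen while embedding segment $i$ without any foresight. Since the first bullet claims the exact value $k-1$, these hand-waved steps carry the entire weight of the proof, and as stated they fail.

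What the paper does instead is a partition argument, not an embedding. For each admissible segment $T'$ one bounds the chromatic number of $T'$-free graphs directly: a greedy right-to-left (or left-to-right) coloring gives $f_\prec(T')\leq |V(T')|-1$ for generalized stars; $f_\prec(T')=3$ for a $2$-nesting by Lemma~9 of~\cite{DW04}; and $f_\prec(T')=3$ for a $2$-crossing because $2$-crossing-free ordered graphs are outerplanar. These exact per-segment values are then combined with Reduction Lemma~\ref{red:innerCut}: if an inner cut vertex splits $H$ into $H_1$ and $H_2$ and $G\in{\rm Forb}_\prec(H)$, let $V_1$ be the set of vertices of $G$ that are rightmost in some copy of $H_1$; then $G[V(G)\setminus V_1]\in{\rm Forb}_\prec(H_1)$ and $G[V_1]\in{\rm Forb}_\prec(H_2)$, so $\chi(G)\leq f_\prec(H_1)+f_\prec(H_2)$. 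Summing $k_i-1$ over the segments gives exactly $k-1$ for the first bullet, and summing $2k_i-3$ (using~\cite{DW04},~\cite{CapoyleasPach} and Theorem~\ref{non-crossing} for the larger segment types) gives $2k-3$ for the second. Your proposal asserts an aggregation ``along the inner cut vertices'' but never supplies this mechanism, and the embedding mechanism you do describe cannot replace it; to repair the proof you would need to state and prove a cut-vertex reduction of this kind and the three per-segment chromatic bounds with their exact constants.
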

 

The paper is organized as follows.
In Section~\ref{definitions} we introduce all missing necessary notions.
In Section~\ref{connections} we summarize the known results on extremal functions and Ramsey numbers for ordered graphs and show how they could be used in determining  $f_\prec$.
In Section~\ref{reductions} we prove  some structural lemmas and provide several reductions that are used in the proofs of the main results and that might be of independent interest.
Section~\ref{proofs} contains the proofs of Theorems~\ref{Tutte-Shift}--\ref{others}.
We summarize all known results for forests with at most three edges in Section~\ref{small-forests}. 
Finally, Section~\ref{conclusions} contains conclusions and open questions.


 \section{Definitions}\label{definitions}

Let $K_n$ denote a complete graph on $n$ vertices.
For a positive integer $n$ and an ordered graph $H$, let ${\rm ex}_\prec(n,H)$ denote the \emph{ordered extremal number}, i.e., the largest number of edges in an ordered graph on $n$ vertices in ${\rm Forb}_\prec(H)$.
For an ordered graph $H$ the \emph{ordered Ramsey number} $R_\prec(H)$ is the smallest integer $n$ such that in  any edge-coloring of an ordered $K_n$ in two colors there is a  monochromatic copy of $H$.
Recall that an interval in an ordered graph $G$ is a set $I$ of vertices such that for all vertices $u,v\in I$, $x\in V(G)$ with $u\prec x\prec v$ we have $x\in I$.
The \emph{interval chromatic number} $\chi_\prec(G)$ of an ordered graph $G$ is the smallest number of intervals, each inducing an independent set in $G$, needed to partition $V(G)$.
An inner cut vertex $v$ of an ordered graph $G$ \emph{splits $G$ into ordered graphs $G_1$ and $G_2$} if $G_1$ is induced by all vertices $u$ with $u\preceq v$ in $G$ and $G_2$ is induced by all vertices  $u$ with $v\preceq u$.
A vertex of degree $1$ is called a \emph{leaf}.
A vertex in an ordered graph $G$ is called \emph{reducible}, if it is a leaf in $G$, is leftmost or rightmost in $G$ and has a common neighbor with the vertex next to it.
We call an edge $uv$ in a graph $G$ \emph{isolated} if $u$ and $v$ are leaves in $G$.
A graph $G$ is \emph{$t$-degenerate} if each subgraph of $G$ has a vertex of degree at most $t$.
 A vertex $v$ is \emph{between} vertices $u$ and $w$ if $u\preceq v\preceq w$. 
The \emph{reverse} $\overline{G}$ of an ordered graph $G$ is the ordered graph obtained by reversing the ordering of the vertices in $G$.
A \emph{$u$-$v$-path} $P$ is a path starting with $u$ and ending with $v$, i.e., a path $v_1,\ldots, v_k$ with $u=v_1$, $v=v_k$.
Given a path $P=v_1,\ldots, v_k$ let $v_iP=v_i,\ldots, v_k$ and $Pv_i=v_1,\ldots, v_i$.
Similarly for a neighbor $v\not\in V(P)$ of $v_1$ let $vP =v,v_1,\ldots, v_k$.
If $U \subseteq V(G)$, $F\subseteq E(G)$ let $G[U]$, $G-U$ and $G-F$  denote the graphs $(U, E(G)\cap \binom{U}{2})$, $(V(G)\setminus U,E(G) \cap \binom{V(G)-U}{2})$, and $(V(G),E(G)\setminus F)$, respectively.
In particular if $u,v\in V(G)$ then $G-\{u,v\}$ is the graph obtained by removing $u$ and $v$ from $G$, not the edge $uv$ only.
If $u\in V(G)$ let $G-u=G-\{u\}$.
The definitions of tangled paths, bonnets, crossing edges and subgraphs, intervals, segments,  inner cut-vertices, and monotonically alternating trees are given before the statements of the main theorems in the introduction.
We shall typically denote a general ordered graph by $H$,  a tree or a forest by $T$, and a larger ordered graph by $G$.
For all other undefined graph theoretic notions we refer the reader to West~\cite{West}.


\section{Connections to known results}\label{connections}

There are connections between the extremal number ${\rm ex}_\prec(n,H)$ and the function $f_\prec(H)$. 
If there is a constant $c$ such that ${\rm ex}_\prec(n,H) < c\, n$ for every $n$, then 
\begin{equation}\label{f-ex}
 f_\prec(H) \leq 2c,
\end{equation} 
 so $f_\prec(H)$ is finite.
Indeed, if ${\rm ex}_\prec(n,H) < c\, n$ then any $G\in{\rm Forb}_\prec(H)$ has less than $c\,|V(G)|$ edges, and hence has a vertex of degree less than $2c$.
Thus if $G\in{\rm Forb}_\prec(H)$, then each subgraph of $G$ is in ${\rm Forb}_\prec(H)$, so each subgraph has a vertex of degree less than $2c$, so $G$ is $(2c-1)$-degenerate.
Therefore $\chi(G)\leq 2c$.

Ordered extremal numbers are studied in detail in~\cite {PachTardos}.
Recall that $\chi_\prec(G)$ is the smallest number of intervals, each inducing an independent set, needed to partition the vertices of an ordered graph $G$.
Pach and Tardos~\cite{PachTardos} prove that  for each ordered graph $H$
 \[{\rm ex}_\prec(n,H) = \left(1-\frac{1}{\chi_\prec(H)-1}\right)\binom{n}{2}+o(n^2).\]

For ordered graphs with interval chromatic number $2$, Pach and Tardos find a tight relation between the ordered extremal number and pattern avoiding matrices.
For an ordered graph $H$ with $\chi_\prec(H)=2$ let $A(H)$ denote the $0$-$1$-matrix where the rows correspond to the vertices in the first color and the columns to the vertices in the second color of a proper interval coloring of $H$ in $2$ colors and let $A(H)_{u,v}=1$ if and only if $uv$ is an edge in $H$.
A $0$-$1$-matrix $B$ \emph{avoids} another $0$-$1$-matrix $A$ if there is no submatrix in $B$ which becomes equal to $A$ after replacing some ones with zeros.
For a $0$-$1$-matrix $A$ let ${\rm ex}(n,A)$ denote the largest number of ones in an $n\times n$ matrix avoiding $A$.
In~\cite{PachTardos} it is shown that for each ordered graph $H$ with $\chi_\prec(H)=2$ there is a constant $c$ such that
 ${\rm ex}(\floor{\tfrac{n}{2}},A(H)) \leq {\rm ex}_\prec(n,H) \leq c\, {\rm ex}(n,A(H))\log n.$
 Thus, when ${\rm ex}(n,A(H))$ is linear in $n$, one can guarantee that ${\rm ex}_\prec (n, H) = O(n\log n)$, but this is not enough to claim that $f_\prec (H)\neq \infty$.

In addition, we see that there is no direct connection between  $f_\prec(H)$ and ${\rm ex}_\prec (n, H)$   because  there are dense ordered graphs avoiding $H$ 
for some ordered graphs $H$ with small  $f_\prec(H)$.  A specific example for such a graph $H$ is an ordered path $u_1u_2u_3u_4$, with $u_1\prec u_2\prec u_3\prec u_4$.
One can see from  Theorem~\ref{others} that $f_\prec(H)= 3$, but a complete bipartite ordered graph $G$ with all vertices of one bipartition class to the left of all other vertices does not contain $H$ and has $|V(G)|^2/4$ edges.  
However, for some ordered graphs $H$ with interval chromatic number $2$, one can show that ${\rm ex}_\prec(n,H)$ is linear. This in turn, implies that $f_\prec(H)$ is finite. 

Some of the extensive research on forbidden binary matrices and extremal functions for ordered graphs can be found in~\cite{CyclicOrderedGraphs,FuerediHajnal, Klazar_DavenportSchinzelSeq, Klazar_SmallGraphs, MarcusTardos}.

There are also connections between the Ramsey numbers $R_\prec(H)$ for ordered graphs and the function $f_\prec(H)$.
If the edges of $K_n$, $n= R_{\prec} (H) -1$,  are colored in two colors without monochromatic copies of $H$, then both color classes form ordered graphs $G_1$ and $G_2$ not containing $H$ as an ordered subgraph.
Then one of the $G_i$'s has chromatic number at least $\sqrt{n}$, since a product of proper colorings of $G_1$ and $G_2$ yields a proper coloring of $K_n$.
Therefore $f_\prec(H) \geq \sqrt{R_\prec(H)-1}$.
Ordered Ramsey numbers were recently studied by Conlon \textit{et al.}~\cite{ConlonFoxLeeSudakov} and Balko \textit{et al.}~\cite{BalkoCibulkaKralKyncl}.
Other research on ordered graphs includes characterizations of classes of graphs by forbidden ordered subgraphs~\cite{Damaschke, Ginn} and  the study of perfectly ordered graphs~\cite{Chvatal}.
  

\section{Structural Lemmas and Reductions}\label{reductions}

In this section we first analyze the structure of ordered trees without bonnets and tangled paths.
This  leads to a proof of Theorem~\ref{structural} in Section~\ref{proofs}.
Afterwards we establish several cases  when  $f_\prec(H)$ can be upper bounded in terms of $f_\prec(H')$ for a subgraph $H'$ of $H$.
This allows us to reduce the problem of whether $f_\prec(H) \neq \infty$ to the problem of whether $f_\prec(H') \neq \infty$.
These \emph{reductions} are the crucial tools in the proof of Theorem~\ref{non-crossing} in Section~\ref{proofs}.

 
\begin{lemma}\label{lem:edgeCoversVertex}
 Let $T$ be an ordered tree that does not contain a tangled path and let $u\prec v\prec w$ be vertices in $T$.
 If $uw$ is an edge in $T$, then all vertices of the path connecting $u$ and $v$ in $T$ are between $u$ and $w$.
\end{lemma}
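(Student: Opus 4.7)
The plan is to argue by contradiction: suppose that the unique $u$-$v$-path $P$ in $T$ visits a vertex $y$ with $y\prec u$ or $y\succ w$, and exhibit a tangled path in $T$. The edge $uw$ will serve as one of the two crossing edges; the other will be supplied by an edge of $P$.

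I would first isolate a short auxiliary claim: if $P'=r_0,r_1,\ldots,r_m$ is any path whose vertices all avoid $\{u,w\}$, with $r_0\notin[u,w]$ and $r_m\in(u,w)$, then some edge of $P'$ crosses $uw$. Indeed, the largest index $i$ with $r_i\notin(u,w)$ but $r_{i+1}\in(u,w)$ exists, and since $r_i\notin\{u,w\}$ either $r_i\prec u\prec r_{i+1}\prec w$ or $u\prec r_{i+1}\prec w\prec r_i$ holds, so $r_ir_{i+1}$ crosses $uw$.

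Next I would split into cases according to whether $w\in V(P)$. If $w\in V(P)$, uniqueness of paths in the tree $T$ forces $P=u,w,p_2,\ldots,p_k=v$. I would choose $y$ to be the leftmost vertex of $P$ when some vertex of $P$ is $\prec u$, and otherwise the rightmost vertex of $P$ (which must be $\succ w$ by the contradiction assumption). In both subcases $y$ is an interior vertex of $P$ extremal in $P$; the edge $uw$ sits on the $u$-side of $y$ in $P$; and the auxiliary claim applied to the $y$-$v$-subpath of $P$ (whose vertices avoid $\{u,w\}$ since $u$ and $w$ occupy only positions $0$ and $1$ of $P$) produces a crossing edge on the $v$-side. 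Hence $P$ itself is tangled.

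If $w\notin V(P)$, I would prepend $w$ to $P$ along the edge $uw$, forming the path $Q=w,u,p_1,\ldots,p_k=v$ in $T$. When some vertex of $P$ is $\prec u$, take $y$ to be leftmost; then $y$ is interior and leftmost in $Q$, the edge $uw$ is the first edge of $Q$, and the auxiliary claim applied to the $y$-$v$-subpath of $P$ delivers the second crossing edge. Otherwise every vertex of $P$ is $\succeq u$ so by assumption some $p_j\succ w$; in this case $u$ itself is interior and leftmost in $Q$, the edge $uw$ lies on its $w$-side, and the auxiliary claim applied to the $p_j$-$v$-subpath of $P$ gives a crossing edge on its $v$-side. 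In every case we obtain a tangled path in $T$, contradicting the hypothesis. The main obstacle is exactly this last degenerate subcase, where no vertex of $P$ is strictly less than $u$: the original path $P$ offers no interior extremal witness, and it is only after prepending $w$ to form $Q$ that $u$ itself assumes the interior-leftmost role on which the argument relies.
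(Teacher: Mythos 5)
Your proof is correct and follows essentially the same strategy as the paper's: locate the leftmost (or rightmost) interior vertex of a path containing the edge $uw$ and show it witnesses a tangled path via an edge crossing $uw$. The paper avoids your case split on whether $w\in V(P)$ by defining $P$ directly as the path starting at $v$ and ending with the edge $uw$, so that $u$ and $w$ are automatically the last two vertices and a single symmetric argument on the leftmost/rightmost vertex finishes; your auxiliary claim is the step left implicit there.
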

\begin{proof}
 Let $P$ be the path in $T$ that starts with $v$ and ends with the edge $uw$.
 Let $\ell$ denote the leftmost vertex in $P$.
 Assume for the sake of contradiction that $\ell\prec u$.
 Then the path $vP\ell$ contains neither $u$ nor $w$ and therefore crosses the edge $uw$.
 Hence the paths $P\ell$ and $\ell P$ cross and $P$ is tangled, a contradiction.
 Therefore $\ell = u$.
 Due to symmetric arguments $w$ is the rightmost vertex in $P$.
 Hence all vertices in $P$ are between $u$ and $w$.
\end{proof}

\begin{lemma}\label{lem:noNewInnerCut}
 Let $T$ be an ordered tree  that contains neither a bonnet nor a tangled path and that has  only one segment .
 Deleting any leaf from $T$ yields an ordered tree that contains neither a bonnet nor a tangled path and that has  only one segment.
\end{lemma}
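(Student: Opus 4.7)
The plan is to observe first that $T-u$ is a tree which, as a subgraph of $T$, inherits the absence of bonnets and tangled paths, so the only real content is that $T-u$ still has only one segment. I would argue by contradiction: suppose $v$ is an inner cut vertex of $T-u$. Then $v$ is non-extremal in $T-u$, hence also non-extremal in $T$, and since $T$ has a single segment, some edge of $T$ covers $v$. Because no edge of $T-u$ covers $v$, this covering edge must be the unique edge $uy$ incident to the leaf $u$; up to reversing the ordering of $T$, I may assume $u\prec v\prec y$.

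Next I would pin down a neighbor of $v$ on each side inside $T-u$. By Lemma~\ref{lem:edgeCoversVertex} applied to $uy$ and $v$, the path from $y$ to $v$ in $T-u$ lies entirely in $(u,y]$. If some edge of that path went from strictly right of $v$ to strictly left of $v$, that edge would cover $v$ in $T-u$, contradicting the assumption. So all interior vertices of this path are $\succ v$, and in particular the predecessor $v_R$ of $v$ on the path is a neighbor of $v$ with $v\prec v_R\preceq y$. Furthermore, because $v$ is non-extremal in $T-u$, there is a vertex $a\prec v$ in $T-u$, and the same zigzag argument applied to the path from $v$ to $a$ in $T-u$ forces its first edge out of $v$ to go strictly left, producing a neighbor $v_L\prec v$ of $v$ in $T-u$.

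Finally I would split into two cases depending on whether $v_L$ lies to the right or to the left of $u$. If $u\prec v_L$, let $y'$ be the neighbor of $y$ on the $y$-to-$v$ path in $T-u$, so $v\preceq y'\prec y$; then $u\preceq v_L\prec v\preceq y'\prec y$ together with the edges $uy$, $y'y$, and $v_Lv$ form a bonnet of the second type in $T$. If $v_L\prec u$, then $v_L\prec u\prec v\prec y$ makes the edges $uy$ and $v_Lv$ cross, and the path obtained by prepending $u$ to the $y$-to-$v$ path in $T-u$ and appending $v_L$ at the end has rightmost vertex $y$ sitting at interior position $2$, with the edge $uy$ in the subpath ending at $y$ and the edge $v_Lv$ in the subpath starting at $y$, so this path is tangled in $T$. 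Either outcome contradicts the assumptions on $T$, completing the argument. The main obstacle is setting up the two neighbors $v_L$ and $v_R$ of $v$ in $T-u$ via Lemma~\ref{lem:edgeCoversVertex} combined with the no-covering-edge assumption; once that structure is in hand, the case split between the nested configuration (bonnet) and the crossing configuration (tangled path) is forced.
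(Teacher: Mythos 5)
Your proposal is correct and follows essentially the same route as the paper's proof: the covering edge of the new inner cut vertex must be the leaf edge, Lemma~\ref{lem:edgeCoversVertex} confines the path from the leaf's neighbor to the cut vertex, and the position of the cut vertex's neighbor on the leaf's side splits into exactly the paper's two cases, yielding a bonnet or a tangled path (your cases are the paper's under reversal, with your $y'$ playing the role of the paper's $w$). The only cosmetic difference is that your vertex $v_R$ is never actually used.
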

\begin{proof}
 Let $uv$ be an edge in $T$ incident to a leaf $u$ and let $T'=T-u$.
 Then clearly $T'$ is an ordered tree that contains neither a bonnet nor a tangled path.
 For the sake of contradiction assume that  $T'$ has at least two segments and let $x$ be an inner cut vertex in $T'$.
 Then $x\neq u$,$v$ and is between $u$ and $v$ in $T$, since $x$ is not an inner cut vertex in $T$.
 By reversing $T$ if necessary we may assume that $v\prec x\prec u$.
 Let $P$ be the $v$-$x$-path in $T'$.
 All vertices in $P$ are between $v$ and $u$ by Lemma~\ref{lem:edgeCoversVertex} applied to $u$, $v$ and $x$.
 In addition no vertex in $P$ is to the right of $x$ since $x$ is an inner cut vertex in $T'$.
 So all vertices in $P$ are between $v$ and $x$.
 Let $vw$ denote the first edge of $P$ and let $xy$ denote an edge in $T'$ with $x\prec y$.
 Such an edge $xy$ exists since the inner cut vertex $x$ is not rightmost in $T'$ and $T'$ is connected.
 If $u\prec y$, then $uvPxy$ is a tangled path in $T$.
 If $y\prec u$, then $u$, $v$, $w$, $x$ and $y$ form a bonnet in $T$.
 In both cases we have a contradiction and hence $T'$ has only one segment.
\end{proof}

\begin{lemma}\label{lem:goodTreeBipartite}
 If $T$ is an ordered tree that contains neither a bonnet nor a tangled path and that has only one segment, then $\chi_\prec(T)\leq 2$.
\end{lemma}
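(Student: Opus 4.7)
The plan is to exhibit an interval bipartition $V(T)=L\dot\cup R$ with $L\prec R$ into two independent sets; this immediately yields $\chi_\prec(T)\leq 2$.

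The core structural claim I would prove first is that no vertex of $T$ has neighbors on both sides of it. Suppose $v$ had a rightmost left-neighbor $u_L$ and a leftmost right-neighbor $u_R$. Since $T$ has only one segment and $v$ is internal, some edge $xy$ with $x\prec v\prec y$ covers $v$, and it cannot be $u_Lu_R$ as $T$ is acyclic. A case distinction on the positions of $x$ and $y$ relative to $u_L,u_R$ then produces a forbidden configuration. If $x=u_L$ and $v\prec y\prec u_R$, the path $y,u_L,v,u_R$ is tangled: its leftmost vertex in $\prec$ is $u_L$, which is a middle vertex of the path, and the edges $u_Ly$ and $vu_R$ cross. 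If $x=u_L$ and $y\succ u_R$, then $u_L,v,v,u_R,y$ is a bonnet of the first form (its ordering $u_L\prec v\preceq v\prec u_R\preceq y$ and the three required edges $u_Lv,u_Ly,vu_R$ are in $T$). The cases $y=u_R$ are symmetric. When $x\neq u_L$ and $y\neq u_R$, one passes to the component $C$ of $T-v$ containing $xy$ and picks in $C$ the covering edge of $v$ whose left-endpoint is as close to $v$ as possible; iterating this reduction eventually forces a covering edge incident to $u_L$ or $u_R$, returning us to the previous easy cases.

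Granted the structural claim, set $L:=\{w\in V(T):\text{every neighbor of }w\text{ is }\succ w\}$ and $R:=\{w\in V(T):\text{every neighbor of }w\text{ is }\prec w\}$. These sets disjointly partition $V(T)$, and each is independent because any edge $uw$ with $u\prec w$ would place $u\notin R$ and $w\notin L$. It remains to show $L\prec R$. Assume, for contradiction, that $u\in R$, $w\in L$, and $u\prec w$, and let $P=u_0,\dots,u_m$ be the path from $u_0=u$ to $u_m=w$ in $T$. Applying the structural claim at every interior $u_k$ forces its two path-neighbors to lie on the same side of $u_k$, so $P$ is a strict peak/valley zig-zag in $\prec$ beginning at the peak $u$ (since $u_1\prec u$) and ending at the valley $w$ (since $u_{m-1}\succ w$). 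Using $u_1\prec u\prec w\prec u_{m-1}$ together with the path edges $(u_1,u)$ and $(w,u_{m-1})$ and a path edge incident to the leftmost valley of $P$, a short case analysis on the peak/valley profile of $P$ exhibits either a bonnet in $T$ (whose role of $u_1$ is played by a valley with two right-neighbors on $P$) or a tangled sub-path of $P$ centred at that leftmost valley, contradicting the hypothesis on $T$.

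The main obstacle is the structural claim, specifically the sub-case where the covering edge of $v$ has no endpoint in $\{u_L,u_R\}$; ensuring the reduction inside components of $T-v$ terminates is the most subtle part. A cleaner alternative would be to induct on $|V(T)|$ using Lemma~\ref{lem:noNewInnerCut} to remove a well-chosen leaf and argue that the interval bipartition extends. The peak/valley case analysis in the last step is also somewhat delicate, but its combinatorics mirrors exactly the definitions of a bonnet and of a tangled path, which are designed to capture precisely these obstructions.
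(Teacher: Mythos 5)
Your overall strategy (first show no vertex of $T$ has neighbours on both sides, then show the resulting sets $L$ and $R$ satisfy $L\prec R$) is different from the paper's proof, which instead inducts on $|V(T)|$: it deletes a leaf, invokes Lemma~\ref{lem:noNewInnerCut} to keep the hypotheses, and extends the interval $2$-colouring of the smaller tree, locating a bonnet via Lemma~\ref{lem:edgeCoversVertex} whenever the extension would fail. The difficulty is that your route has a genuine gap exactly where you flag the ``most subtle part''. In the structural claim, the case where the covering edge $xy$ of $v$ has no endpoint in $\{u_L,u_R\}$ is not handled by the proposed iteration: the component $C$ of $T-v$ containing $xy$ corresponds to a single neighbour of $v$, and if that neighbour is a third vertex $z\notin\{u_L,u_R\}$ (which is possible a priori, since $v$ may have degree at least $3$), then \emph{no} covering edge inside $C$ is incident to $u_L$ or $u_R$, so iterating and re-choosing covering edges in $C$ can never ``force a covering edge incident to $u_L$ or $u_R$''. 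Even when $C$ does contain $u_L$ or $u_R$, choosing the covering edge whose left endpoint is closest to $v$ is not argued to produce such incidence. The contradiction in this case really has to come from the path in $T$ joining $v$ to the edge $xy$ (whose vertices Lemma~\ref{lem:edgeCoversVertex} confines to the interval between $x$ and $y$), together with the edges $u_Lv$, $vu_R$; for instance, if that path leaves $v$ through $u_R$ one can exhibit a tangled path, while other attachments yield bonnets of the second form, but this requires a case analysis you have not carried out, and it is precisely the kind of work the paper sidesteps by doing the leaf-deletion induction, where the analysis stays local to the deleted leaf.

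A secondary, smaller issue: the final step (deriving a bonnet or tangled subpath from a peak/valley path joining $u\in R$ to $w\in L$ with $u\prec w$) is asserted rather than proved. It does look salvageable --- in the shortest case, a zigzag $u_0,u_1,u_2,u_3$ with $u_1\prec u_0\prec u_3\prec u_2$ is itself a $4$-vertex bonnet of the first form --- but for longer paths the reduction to the leftmost valley still needs to be written down. As you yourself note, switching to the paper's induction via Lemma~\ref{lem:noNewInnerCut} avoids both of these open steps.
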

\begin{proof}
 We prove the claim by induction on $k=|V(T)|$.
 If $k\leq 2$, then clearly $\chi_\prec(T)\leq 2$.
 So assume that $k\geq 3$.
 Let $u$ denote a leaf in $T$, $v$ its neighbor in $T$, and let $T'=T-u$.
 Then $T'$ has only one segment and contains neither a bonnet nor a tangled path due to Lemma~\ref{lem:noNewInnerCut}.
 Inductively $\chi_\prec(T')\leq 2$, i.e., there is a partition $L\dot\cup R=V(T')$, with $L \prec R$, such that all edges in $T'$ are between $L$ and $R$.
 By reversing $T$ if necessary we assume that $v\in L$.
 For the sake of contradiction assume that $\chi_\prec(T)>2$.
 Then $u\prec \ell$ for the rightmost vertex $\ell$ in $L$, possibly $\ell = v$.
 Let $w\in R$ denote one fixed neighbor of $v$ in $T'$.
 Then all vertices of the path connecting $\ell$ and $v$ in $T'$ are between $v$ and $w$ due to Lemma~\ref{lem:edgeCoversVertex}.
 In particular $\ell$ is incident to an edge $\ell x$, $x\in R$, with $x\preceq w$.
 Hence $u\prec v$, since otherwise there is a bonnet on vertices $v,u,\ell,x$, and $w$ in $T$.
 If there is a vertex $y$, $u\prec y\prec v$, then all vertices of the path connecting $y$ and $u$ in $T$ are between $u$ and $v$ due to Lemma~\ref{lem:edgeCoversVertex}.
 But this is not possible since $y,v\in L$ and all the neighbors of $y$ are in $R$.
 Hence $u$ is immediately to the left of $v$ in $T$.
 Note that $u$ is not leftmost in $T$, since otherwise $v$ is an inner cut vertex in $T$.
 Consider the path $P$ connecting a vertex left of $u$ to $\ell$ in $T$.
 This path contains distinct vertices $p,q\in L$, $r\in R$, such that $pr$ and $rq$ are edges in $P$ and $p\prec u\prec v\preceq q\prec r$.
 Hence there is a bonnet, a contradiction.
 This shows that $\chi_\prec(T)\leq 2$.
\end{proof}


We now present several reductions.
Let us mention that some of the following arguments are similar to reductions used for extremal numbers of matrices~\cite{PachTardos, Tardos}.

Recall, that an inner cut vertex $v$ of an ordered graph $H$ splits $H$ into ordered graphs $H_1$ and $H_2$, where $H_1$ is induced by all vertices $u$ with $u\preceq v$ in $H$ and $H_2$ is induced by all vertices  $u$ with $v\preceq u$.
See Figure~\ref{fig:Reductions} (left).
\begin{figure}
\begin{minipage}[b]{0.28\textwidth}
 \centering
 \includegraphics{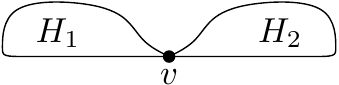}
\end{minipage}
 \hfill
 \begin{minipage}[b]{0.3\textwidth}
 \centering
 \includegraphics{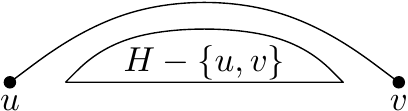}
\end{minipage}
\hfill
\begin{minipage}[b]{0.38\textwidth}
 \centering
 \includegraphics{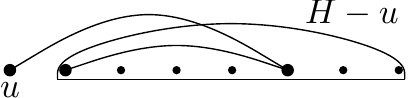}
\end{minipage}
\caption{An inner cut vertex $v$ splitting an ordered graph into ordered graphs $H_1$ and $H_2$ (left), an isolated edge $uv$ in an ordered graph $H$  (middle), and a reducible vertex $u$ (right).}
\label{fig:Reductions}
\end{figure}

\begin{reduction}\label{red:innerCut}
 If an inner cut vertex $v$ splits an ordered graph $H$ into ordered graphs $H_1$ and $H_2$ with $f_\prec(H_1),f_\prec(H_2) \neq \infty$, then
 \[
  f_\prec(H) \leq f_\prec(H_1) + f_\prec(H_2).
 \]
\end{reduction}
\begin{proof}
 Consider an ordered graph $G\in{\rm Forb}_\prec(H)$.
 Let $V_1$ denote the set of vertices in $G$ that are rightmost in some copy of $H_1$ in $G$.
 Further let $V_2=V(G)\setminus V_1$.
 Then $G[V_2]\in {\rm Forb}_\prec(H_1)$ by the choice of $V_1$.
 Moreover $G[V_1]\in {\rm Forb}_\prec(H_2)$, since otherwise the leftmost vertex $u$ in a copy of $H_2$ in $G[V_1]$ is also a rightmost vertex in a copy of $H_1$ and hence plays the role of $v$ in a copy of $H$ in $G$.
 Thus $\chi(G) \leq \chi(G[V_1]) + \chi(G[V_2]) \leq f_\prec(H_2) + f_\prec(H_1)$ and since $G \in{\rm Forb}_\prec(H)$ was arbitrary we have $f_\prec(H) \leq f_\prec(H_1) + f_\prec(H_2)$.
\end{proof}


\begin{reduction}\label{red:isolatedVertex}
 If $v$ is an isolated vertex in an ordered graph $H$ with $|V(H)|\geq 3$ and $f_\prec(H-v) \neq \infty$, then $f_\prec(H) \leq 2\,f_\prec(H-v).$ 
\end{reduction}
\begin{proof}
 Consider an ordered graph $G\in{\rm Forb}_\prec(H)$. 
If $v$ is not leftmost or rightmost in $H$, then let $V_1$ denote a set of every other vertex in $G$ and let $V_2=V(G)\setminus V_1$.
 Then $G[V_1], G[V_2]\in {\rm Forb}_\prec(H-v)$, since for any two vertices $u\prec w$ in $V_i$ there is a vertex $v\in V_{3-i}$ with $u\prec v\prec w$, $i=1,2$.
 Hence $\chi(G) \leq \chi(G[V_1])+\chi(G[V_2]) \leq 2f_\prec(H-v)$.
 If $v$ is the leftmost or the rightmost in $H$, assume without loss of generality the former.
 Then clearly $G-u \in {\rm Forb}_\prec(H-v)$ for the leftmost vertex $u$ of $G$. Thus $\chi(G) \leq 1+ \chi(G-u) \leq 1+ f_{\prec}(H-v) \leq 2f_{\prec}(H-v)$.
 Since $G \in{\rm Forb}_\prec(H)$ was arbitrary we have $f_\prec(H) \leq 2f_\prec(H-v)$ in both cases.
\end{proof}


\begin{reduction}\label{red:isolatedEdge}
 Let $u$ and $v$ be the leftmost and rightmost vertices in an ordered graph $H$, $|V(H)|\geq 4$.
 If $uv$ is an isolated edge in $H$ and $f_\prec(H-\{u,v\}) \neq \infty$, then
 \[
  f_\prec(H) \leq 2\,f_\prec(H-\{u,v\})+1.
 \] 
\end{reduction}
\begin{proof}
 See Figure~\ref{fig:Reductions} (middle).
 Let $H'=H-\{u,v\}$ and consider an ordered graph $G\in{\rm Forb}_\prec(H)$.
 If $G$ does not contain a copy of $H'$, then $\chi(G) \leq f_\prec(H') \leq 2f_\prec(H')+1$. So, assume that $G$ contains a copy of $H'$.
 Let $V_1\dot\cup\cdots\dot\cup V_p$ denote a partition of $V(G)$ into disjoint intervals with $V_1 \prec \cdots \prec V_p$, $v_i$ being the leftmost vertex in $V_i$, $1\leq i\leq p$, such that $G[V_i]\in{\rm Forb}_\prec(H')$, $1\leq i\leq p$, and $G[V_i\cup \{v_{i+1}\}]$ contains a copy of $H'$, $1\leq i<p$.
 Note that one can find such a partition greedily by iteratively choosing a largest interval from the left that does not induce any copy of $H'$ in $G$.
 If $p\geq 3$, there are no edges $xy$ with $x\in V_i$ and $v_{i+2}\prec y$, since otherwise $xy$ together with a copy of $H'$ in $G[V_{i+1}\cup\{v_{i+2}\}]$ forms a copy of $H$, $1\leq i\leq p-2$.
 
 Choose a set $\Phi$ of $2\,f_\prec(H')+1$ distinct colors.
 Let $\Phi_1,\ldots,\Phi_p\subset \Phi$ denote subsets of colors such that $|\Phi_i|=f_\prec(H')$, $1\leq i\leq p$, $\Phi_i\cap\Phi_{i+1}=\emptyset$, $1\leq i<p$, and, if $p\geq 3$, $\Phi_{i+2}\setminus (\Phi_i\cup\Phi_{i+1})\neq \emptyset$, $1\leq i\leq p-2$.
 Note that such sets $\Phi_i$ can be chosen greedily from $\Phi$.
 Since $G[V_i]\in{\rm Forb}_\prec(H')$ we can color $G[V_i]$ properly with colors from $\Phi_i$, $1\leq i\leq p$, such that, if $i\geq 3$, $v_i$ is colored with a color in $\Phi_i\setminus(\Phi_{i-1}\cup\Phi_{i-2})$.
 This yields a proper coloring of $G$ using colors from the set $\Phi$ only.
 Hence $\chi(G) \leq 2\,f_\prec(H')+1$.
 Since $G \in{\rm Forb}_\prec(H)$ was arbitrary we have $f_\prec(H) \leq 2\,f_\prec(H-\{u,v\})+1$.
\end{proof}

Recall, that a vertex in an ordered graph $H$ is called \emph{reducible}, if it is a leaf in $H$, is leftmost or rightmost in $H$ and has a common neighbor with the vertex next to it.
See Figure~\ref{fig:Reductions} (right).


\begin{reduction}\label{red:outerReduction}
 Let $H$ denote an ordered graph with $|V(H)|\geq 3$.
 If $u$ is a reducible vertex in $H$ and $f_\prec(H-u) \neq \infty$, then
 \[
  f_\prec(H) \leq 2\, f_\prec(H-u).
 \]
 Moreover, for each $G\in{\rm Forb}_\prec(H)$ there is $G'\subseteq G$ such that $G'$ is $1$-degenerate and deleting the edges of $G'$ from $G$ yields a graph from ${\rm Forb}_\prec(H-u)$. 
\end{reduction}
\begin{proof}
 By reversing $H$ if necessary we may assume that the reducible vertex $u$ is leftmost in $H$.
 Let $G\in{\rm Forb}_\prec(H)$.
 Let $E$ denote the set of edges in $G$ consisting for each vertex $w$ in $G$ of the longest edge to the left incident to $w$ in $G$, if such an edge exists.

 Assume that there is a copy $H'$ of $H-u$ in $G-E$.
 Let $v$ denote the vertex in $H'$ corresponding to the vertex immediately to the right of $u$ in $H$ and let $w$ denote the vertex in $H'$ corresponding to the neighbor of $u$ in $H$.
 Then $v$ is leftmost in $H'$ and there is an edge between $v$ and $w$ in $H'$.
 Thus, there is an edge $xw$ in $E$ incident to $w$ in $G$ with $x\prec v$.
 Hence $H'$ extends to a copy of $H$ in $G$ with the edge $xw$, a contradiction.
 This shows that $G-E\in{\rm Forb}_\prec(H-u)$.
 
 Finally observe that the graph $G'$ with the edge-set  $E$ is $1$-degenerate and hence $2$-colorable.
 This shows that $\chi(G) \leq \chi(G')\chi(G-E) \leq 2f_\prec(H-u)$ and since $G \in{\rm Forb}_\prec(H)$ was arbitrary we have $f_\prec(H) \leq 2f_\prec(H-u)$.
\end{proof}


Having Reduction Lemma~\ref{red:outerReduction} at hand, we are now ready to prove that every non-crossing monotonically alternating tree $T$ satisfies $f_\prec(T) \neq \infty$.

\begin{lemma}\label{lem:ReducMonAlt}
 If $T$ is a non-crossing monotonically alternating tree with $|V(T)|\geq 2$, then
 \[
  f_\prec(T) \leq 2|V(T)|-3.
 \]
\end{lemma}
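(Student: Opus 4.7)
The plan is to proceed by induction on $k = |V(T)|$. The base case $k = 2$ is immediate, since the only such tree is a single edge with $f_\prec(T) = 1 = 2k-3$.

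For the inductive step ($k\geq 3$), the key structural claim I would establish is that every non-crossing monotonically alternating tree on at least three vertices has a reducible vertex $u$, and moreover that $T-u$ is again a non-crossing monotonically alternating tree. I would prove this by examining the leftmost vertex $\ell_1 \in L$ in two cases. If $\ell_1$ is a leaf with unique neighbor $r$, then the non-crossing condition---which in the bipartite setting $L \prec R$ forces the inequality $\min N(\ell_i) \geq \max N(\ell_j)$ whenever $i < j$---together with connectedness of $T$ will force $r$ to also be a neighbor of $\ell_2$, so $\ell_1$ is reducible. If $\ell_1$ has degree at least $2$, then every non-shortest neighbor of $\ell_1$ lies in $S(R)$ and must therefore be a leaf of $T$; the rightmost such leaf $r_{\max}$ is forced (again by non-crossing and connectivity) to coincide with the rightmost vertex of $T$, and the vertex immediately to its left is forced to be a neighbor of $\ell_1$, so $r_{\max}$ is reducible. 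Since $u$ sits at an extreme position, the lengths of the surviving edges are unchanged, so the shortest-edge sets $S(L')$ and $S(R')$ of $T-u$ still witness the monotonically alternating property.

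Given the structural claim, the plan is to apply the \emph{moreover} clause of Reduction Lemma~\ref{red:outerReduction} iteratively. Starting from any $G \in {\rm Forb}_\prec(T)$, I extract a $1$-degenerate subgraph $G_1 \subseteq G$ with $G - E(G_1) \in {\rm Forb}_\prec(T-u_1)$; then, having verified that $T-u_1$ is again a non-crossing monotonically alternating tree, I repeat the extraction on $G - E(G_1)$ with a reducible vertex $u_2$ of $T-u_1$, and so on. After $k-2$ steps the residual graph lies in ${\rm Forb}_\prec(K_2)$ and is therefore edgeless, so $G$ is exhibited as the edge-disjoint union of at most $k-2$ forests. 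A standard Nash--Williams-style averaging argument shows that the edge-union of $m$ forests on a common vertex set is $(2m-1)$-degenerate and hence $(2m)$-chromatic; taking $m = k-2$ gives $\chi(G) \leq 2(k-2) = 2k-4 \leq 2k-3$, completing the induction.

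The main obstacle is the structural claim on the existence of a reducible vertex. A priori one might worry that a non-crossing monotonically alternating tree could have extreme leaves whose unique neighbor is not shared with the immediately adjacent vertex, so that neither endpoint of the ordering qualifies as reducible. The delicate point is that the bipartite non-crossing constraint forces the neighborhoods $N(\ell_1), N(\ell_2), \ldots$ to appear in nested reverse order in $R$, and combined with tree-connectivity this rules out exactly those configurations that would obstruct reducibility. Without the non-crossing hypothesis---for instance if $T$ contains a pair of crossing edges between $L$ and $R$---the structural claim genuinely fails, so both hypotheses of the lemma are essential.
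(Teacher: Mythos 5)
Your proposal is correct and follows essentially the same route as the paper's proof: locate a reducible leaf at the leftmost or rightmost position of the non-crossing monotonically alternating tree, peel it off via the \emph{moreover} clause of Reduction Lemma~\ref{red:outerReduction} to decompose any $G\in{\rm Forb}_\prec(T)$ into $k-2$ one-degenerate graphs, and conclude $\chi(G)\leq 2(k-2)\leq 2k-3$ by the degeneracy/averaging argument. Your case analysis for finding the reducible vertex (leftmost vertex a leaf versus degree at least two, using the reverse-nested neighborhoods forced by non-crossing) is organized slightly differently from the paper's (which first shows an extreme vertex must be a leaf and then that it is reducible), but the substance is the same and your sketch, including the observation that $T-u$ stays in the class, checks out.
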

\begin{proof}
 Let $k = |V(T)|$ and $G\in{\rm Forb}_\prec(T)$.
  We shall prove that $G$ can be edge-decomposed into $(k-2)$ $1$-degenerate graphs by induction on $k$.
  
  If $k=2$, then $T$ consists of a single edge only. Hence $G$ has an empty edge-set and there is nothing to prove.

So consider $k\geq 3$ and  assume that the induction statement holds for all smaller values of $k$.
 Assume for the sake  of contradiction that the leftmost vertex $u$ and the rightmost $w$ in $T$ are of degree at least $2$.
 Then the longest and the shortest edge  incident to $w$ do not coincide. Let  $e$ be the longest edge incident to $w$.
 Since in a monotonically alternating tree each edge is the shortest edge incident to its left or right endpoint,  $e$ is the shortest edge incident to its left endpoint.
 In particular, $e\neq uw$ because $u$ is incident to another edge $e'$, shorter than $uw$.  Thus $e$ and $e'$  cross since $\chi_\prec(T)\leq 2$, a contradiction.
 Hence the leftmost or the rightmost vertex is a leaf in $T$.
 
 By reversing $T$ if necessary we assume that $u$ is of degree $1$. We shall show that $u$ is a reducible leaf.
 To do so, we need to show that the vertex 
$x$ that is  immediately to the right of $u$ is  adjacent to the neighbor $v$ of $u$.  
Assume for the sake of contradiction that $x$ is not adjacent to $v$. 
Note that $v$ is adjacent to a leaf, so it is  not a leaf itself. 
Let $e''$ be an edge incident to $v$, $e''\neq uv$. Then 
an edge incident to $x$ crosses either $uv$ or $e''$ since $\chi_\prec(T)\leq 2$, a contradiction.
Thus  $x$ is adjacent to $v$ and $u$ is a reducible leaf in $T$.

 Therefore, by Reduction Lemma~\ref{red:outerReduction}, there is a $1$-degenerate subgraph $G'$ of $G$ such that removing the edges of $G'$ from $G$ yields a graph $G''\in{\rm Forb}_\prec(T-u)$.
 Observe that the tree $T-u$  is non-crossing and monotonically alternating with $k > |V(T-u)|=k-1 \geq 2$.
 Hence $G''$ can be edge-decomposed into $(k-3)$ $1$-degenerate graphs $G_1,\ldots,G_{k-3}$ by induction.
 Thus the graphs $G_1,\ldots,G_{k-3}, G'$ decompose $G$ into $(k-2)$ $1$-degenerate graphs, proving the induction step.

  If $k=2$, we know that $G$ has no edges and $\chi(G) =1 \leq 2|V(T)|-3$. 
So assume that $k\geq 3$.  Singe   $G$ is a union of $(k-2)$ $1$-degenerate graphs,  each subgraph of $G$ is a union of $(k-2)$ $1$-degenerate graphs, so each subgraph $G^*$  of $G$ on at least one vertex  that has at most $(k-2)(|V(G^*)|-1)$ edges, and thus has a vertex of degree at most $2(k-2)-1$. Therefore  $G$ is $(2(k-2)-1)$-degenerate, so 
 $\chi(G)\leq 2(k-2) \leq 2|V(T)|-3$.  Since $G \in{\rm Forb}_\prec(H)$ was arbitrary we have $f_\prec(H) \leq 2|V(T)|-3$.
\end{proof}

\begin{reduction}\label{red:MatchingReduction}
  Let $T$ denote an ordered matching on at least $2$ edges.
  If $uv$ is an edge in $T$ and $u$ and $v$ are consecutive and $f_\prec(T-\{u,v\}) \neq \infty$, then
  \[
   f_\prec(T) \leq 3\, f_\prec(T-\{u,v\}).
  \]
\end{reduction}
\begin{proof}
 Let $G\in{\rm Forb}_\prec(T)$ with vertices $v_1\prec\cdots\prec v_n$.
 We shall prove that $\chi(G) \leq 3\, f_\prec(T-\{u,v\})$ by induction on $n=|V(G)|$.
 If $n\leq 3\, f_\prec(T-\{u,v\})$, then the claim holds trivially.
 So assume that $n > 3\, f_\prec(T-\{u,v\}) \geq 3$.
 If there are two consecutive vertices $x$, $y$ in $G$ that are not adjacent, then let $G'$ denote the graph obtained by identifying $x$ and $y$.
 Then $G'\in{\rm Forb}_\prec(T)$ and $\chi(G) \leq \chi(G')$.
 Hence $\chi(G) \leq \chi(G') \leq 3\, f_\prec(T-\{u,v\})$ by induction.
 If each pair of consecutive vertices in $G$ forms an edge, then consider a partition $V(G)=V_0\dot\cup V_1\dot\cup V_2$ such that  $V_i=\{v_j\in V(G)\mid j\equiv i\pmod{3}\}$.
 Observe that for each pair of vertices $x,y\in V_i$ there are at least two adjacent vertices from $V(G)\setminus V_i$ between $x$ and $y$.
 Hence $G[V_i]\in{\rm Forb}_\prec(T-\{u,v\})$, $i=0,1,2$, since any copy of $T-\{u,v\}$ in $G[V_i]$ extends to a copy of $T$ in $G$.
 Hence $\chi(G) \leq 3\, f_\prec(T-\{u,v\})$ and since $G \in{\rm Forb}_\prec(H)$ was arbitrary we have $f_\prec(H) \leq 3\, f_\prec(T-\{u,v\})$.
\end{proof}



\section{Proofs of Theorems}\label{proofs}


\subsection{Proof of Theorem~\ref{Tutte-Shift}}

We will prove that if an ordered graph $H$ contains a cycle, a tangled path or a bonnet then for each positive integer $k$ there is an ordered graph $G\in{\rm Forb}_\prec(H)$ with $\chi(G) \geq k$.
 
\bigskip
 
First assume that $H$ contain a cycle of length $\ell$.
Fix a positive integer $k$ and  consider a graph $G$ of girth at least $\ell+1$ and chromatic number at least $k$ that exists by~\cite{LargeGirthHighChromatic}.
Then no ordering of the vertices of $G$ gives  an ordered subgraph isomorphic to $H$.
This shows that for any positive integer $k$, $f_\prec(H)\geq k$ and hence $f_\prec(H)=\infty$.

\bigskip
 
A tangled path is minimal if it does not contain a  proper subpath that is tangled.
Next we shall show that for each minimal tangled path $P$ and each $k\geq 1$ there is an ordered graph $G_k\in {\rm Forb}_\prec(P)$ with $\chi(G_k)\geq k$.

 By reversing $P$ if necessary we assume that in $P$ the paths $Pu$ and $uP$ cross for the rightmost vertex $u$ in $P$.
 We will prove the claim by induction on $k$.   If $k\leq 3$ let $G_k=K_k$ that has no crossing edges and thus no tangled paths.
  Consider $k\geq 4$ and let $G_{k-1}$ denote an $n$-vertex  graph of chromatic number at least $k-1$  that  does not contain a copy of $P$.
  Such a graph exists by induction. 
 The following construction is due to Tutte (alias Blanche Descartes) for unordered graphs~\cite{TutteConstruction}.
 Let  $N=(k-1)(n-1)+1$ and $M= \binom{N}{n} $.
 Consider pairwise disjoint sets  of vertices $U_1, \ldots, U_M,  V$ such that $|U_i|= n$, $i = 1, \ldots, M$,  $|V| = N$ and $U_1\prec\cdots\prec U_M\prec V$.
Let $V_1,\ldots,V_M$ be the $n$-element subsets of $V$. Let each  $U_i$, $i=1, \ldots, M$,  induce a copy of $G_{k-1}$.
Finally let  there be a perfect matching between $U_i$ and $V_i$ such that the $j^{\text{th}}$ vertex in $U_i$ is matched to the $j^{\text{th}}$ vertex in $V_i$,  $i=1, \ldots,  M$.
 See Figure~\ref{fig:Tutte}.

 \begin{figure}
 \centering
 \includegraphics{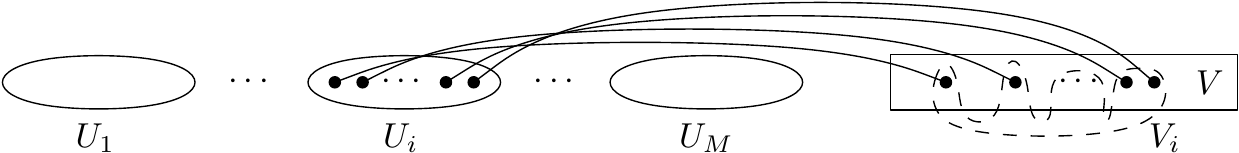}
 \caption{A graph $G_k$ obtained by Tutte's construction from a graph $G_{k-1}$. Here $G_k[U_i]=G_{k-1}$, $1\leq i\leq M$.}
 \label{fig:Tutte}
 \end{figure}
 
 \medskip
 
 First we shall show that $\chi(G_k) \geq  k$. 
 If there are at most $k-1$ colors assigned to  the vertices of $G_k$, then by Pigeonhole Principle there  are $n$ vertices of $V$ of the same color, i.e., there  
 is a set $V_i$ with all vertices of the same color,  say color $1$.
 Since each vertex of $U_i$ is adjacent to a vertex in $V_i$,  no vertex in $U_i$ is colored $1$, so if the coloring is proper, then $G[U_i]$ uses at most $k-2$ colors. 
 Hence the coloring is not proper, since $\chi(G[U_i])=\chi(G_{k-1})\geq k-1$.
 Therefore $\chi(G_k) \geq k$.
 
 Now, we shall show that $G_k$ does not contain a copy of $P$.
 Assume that there is such a copy $P'$ of $P$  in $G_k$ with rightmost vertex $u$ of $P'$.
 Let $x$ and $y$ be the neighbors of $u$ in $P'$, i.e., $P'$ is a union of paths $P'yu$ and $uxP'$.
 Then $u\in V$ and $x,y\not\in V$, since  $G[U_i]$ does not contain a copy of $P$ and there are no edges in $G_k[V]$.
 Let $x\in U_i$ and $y\in U_j$. Note that $i\neq j$ because the edges between $U_i$ and $V$ form a matching.
 The path $uxP'$ is a proper subpath of $P'$ and hence is not tangled.
 Recall that for each edge $zw$ with $z\in U_i$, $w\in V$, and $w\prec u$, we have $z\prec x$ due to the construction of the matching between $U_i$ and $V_i$.
 Hence the path $uxP'$ does not contain any vertex $w\in V$ with $w\prec u$, since otherwise the path $uxP'w$ has a vertex left of $x$ contradicting Lemma~\ref{lem:edgeCoversVertex} applied to $u$, $x$ and $w$.
 Hence $V(xP')\subseteq U_i$, because there are no edges between $U_i$'s and $u$ is rightmost in $P'$.
 See Figure~\ref{fig:NoTuttePath}.
 Similarly, all vertices of $P'y$ are contained in $U_j$.
 Thus $P'u$ and $uP'$ do not cross. However, $P'$ is a copy of $P$ with respective subpaths crossing, a contradiction. 
 Hence $G_k\in {\rm Forb}_\prec(P)$.

Now, if an ordered graph $H$ contains a tangled path, then it contains a minimal tangled path.
Thus $f_\prec(H)=\infty$.
\begin{figure}
\centering
\includegraphics{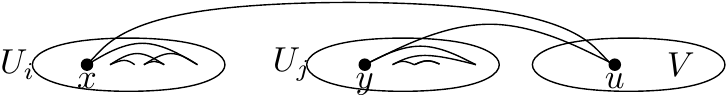}
\caption{A path in $G_k$ with rightmost vertex $u\in V$ is not tangled if $Pu$ and $uP$ are not tangled.}
\label{fig:NoTuttePath}
\end{figure}

\bigskip

Now, let $B$ be a bonnet.
By reversing $B$ if necessary, we assume that $B$ has vertices  $u\prec v  \preceq x,y\preceq w$ and edges $uv$, $uw$, $xy$.
A shift graph $S(n)$ is defined on vertices $\{(i,j)\mid 1\leq i<j\leq n\}$ and edges $\{\{(i,j),(j,t)\}\mid 1\leq i<j<t\leq n\}$.
We will show that some ordering of $S(n)$ does not contain $B$.
Let $G=S(n)$ be a shift graph  with vertices ordered  lexicographically, i.e., $(x_1,x_2)\prec (y_1,y_2)$ if and only if $x_1<y_1$, or $x_1=y_1$ and $x_2<y_2$.  
Assume that $G$ contains  vertices $u =(u_1,u_2)$, $v=(v_1, v_2)$, $x=(x_1, x_2)$, $y=(y_1, y_2)$ and $w=(w_1, w_2)$ that form  a copy of $B$ with $u\prec v  \preceq x,y\preceq w$ and edges $uv$, $uw$, $xy$.
Then $u_2=v_1$,  $u_2=w_1$, $x_2= y_1$. Thus $v_1=w_1$. However, since $v\preceq x,y\preceq w$,  we have that $v_1\leq x_1, y_1 \leq w_1$, so $x_1=y_1=v_1=w_1$. 
But $x_2=y_1$, thus $x_2=x_1$, a contradiction.
Thus $G\in{\rm Forb}_\prec(B)$.
We claim that $\chi(G) \geq \log(n) \geq \log c|V(G)|$.
Indeed consider a proper coloring $\phi$ of $G$ using $\chi(G)$ colors and sets of colors $\Phi_i=\{\phi(i,j)\mid i < j\leq n\}$, $1\leq i\leq n$.
Then $\phi(i,j)\not\in \Phi_j$, since a vertex $(i,j)$ is adjacent to all vertices $(j,t)$, $j<t\leq n$.
Therefore $\Phi_i\neq \Phi_j$ for all $j<i$.
Hence all the sets of colors are distinct.
This shows that $2^{\chi(G)}\geq n$, since there are at most $2^{\chi(G)}$ distinct subsets of colors.
This proves that $\chi(G) \geq \log(n)$.
Thus, for any $k$, there is an ordered graph of chromatic number at least $k$ in ${\rm Forb}_\prec(B)$.
So, if an ordered graph $H$ contains a bonnet, then $f_\prec(H)=\infty$.
\qed


\subsection{Proof of Theorem~\ref{structural}}


 Let $T'$ be a segment of an ordered tree that does not contain a bonnet or a tangled path.
 We shall prove that $T'$ is monotonically alternating by induction on $k=|V(T')|$. 
 Every ordered tree on at most two vertices is monotonically alternating.
 So suppose $k\geq 3$.
 We have $\chi_\prec(T') = 2$ due to Lemma~\ref{lem:goodTreeBipartite}.
  
 \begin{claim}
  The leftmost or the rightmost vertex in $T'$ is of degree $1$.
 \end{claim}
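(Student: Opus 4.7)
The plan is to assume for contradiction that both the leftmost vertex $u$ and the rightmost vertex $w$ of $T'$ have degree at least $2$ in $T'$ and then produce either a tangled path or a bonnet in $T'$, contradicting the hypothesis on $T'$. Using $\chi_\prec(T')=2$ from Lemma~\ref{lem:goodTreeBipartite}, I fix a partition $V(T')=L\dot\cup R$ with $L\prec R$ and both $L$ and $R$ independent. Then $u\in L$, $w\in R$, every neighbor of $u$ lies in $R$, and every neighbor of $w$ lies in $L$. I split into two cases according to whether $uw$ is an edge of $T'$.

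In the case $uw\in E(T')$, the degree assumption provides a second neighbor $v'\in R$ of $u$ with $u\prec v'\prec w$ and a second neighbor $z'\in L$ of $w$ with $u\prec z'\prec w$; note that $v'\neq z'$ as they lie in different classes. If $z'\prec v'$, then the walk $v',u,w,z'$ forms a tangled path, since its leftmost vertex $u$ is interior in the walk and the edges $v'u$ and $wz'$ cross. If instead $v'\prec z'$, then $u,v',z',w$ together with the edges $uv'$, $uw$, $wz'$ constitute a bonnet of the first type under the identifications $u_1=u$, $u_2=v'$, $u_3=z'$, $u_4=u_5=w$. Either outcome gives the desired contradiction.

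In the case $uw\notin E(T')$, I choose $v$ to be the rightmost neighbor of $u$ and $z$ to be any neighbor of $w$, so that $u\prec z\prec v\prec w$ and the edges $uv$ and $zw$ cross. Two applications of Lemma~\ref{lem:edgeCoversVertex} show that the $u$-$z$-path in $T'$ lies in the interval $[u,v]$ and the $z$-$v$-path lies in $[z,w]$. Since the $u$-$v$-path in the tree is just the edge $uv$, the standard three-way median argument on $\{u,v,z\}$ implies that either $u$ lies on the $z$-$v$-path, which is impossible because $u\notin[z,w]$, or $v$ lies on the $u$-$z$-path, which must therefore hold. Finally, since $u$ has degree at least $2$ in $T'$, I pick another neighbor $v'$ of $u$ with $v'\prec v$; then $v'\in R$ gives $z\prec v'$, and Lemma~\ref{lem:edgeCoversVertex} applied to the edge $uv'$ and the vertex $z$ forces the $u$-$z$-path into $[u,v']$, contradicting the fact that $v\succ v'$ lies on that path.

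The main obstacle is the second case: one has to combine the crossing pair of edges $uv$, $zw$ with the tree structure to pin the $u$-$z$-path down to going through the rightmost neighbor $v$ of $u$, and then exploit a shorter edge $uv'$ at $u$ to trap the same path in a strictly smaller interval. The first case is essentially a routine order analysis of the two ``spare'' neighbors $v'$ and $z'$ of $u$ and $w$.
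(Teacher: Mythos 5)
Your proof is correct, and its skeleton --- arguing by contradiction, fixing the bipartition $V(T')=L\,\dot\cup\, R$ with $L\prec R$ from Lemma~\ref{lem:goodTreeBipartite}, and splitting on whether the leftmost vertex $u$ and rightmost vertex $w$ are adjacent --- is the same as the paper's. Two points of comparison. In your adjacent case the sub-case $v'\prec z'$ is vacuous: since $v'\in R$, $z'\in L$ and $L\prec R$, one always has $z'\prec v'$, so the tangled path $v',u,w,z'$ already finishes this case; your bonnet for the other ordering is correctly verified but never needed, and the paper exploits exactly this observation to avoid any sub-case split. In the non-adjacent case your route is genuinely different: the paper takes the $u$-$w$ path $P$, observes that its last edge $yw$ has $y\in L$ while a second edge $zu$ at $u$ has $z\in R$ (so $zu$ and $yw$ cross), and concludes that $zP$ is a tangled path; you instead never exhibit a forbidden configuration, deriving the contradiction from applications of Lemma~\ref{lem:edgeCoversVertex} combined with the tree-median of $\{u,v,z\}$ and the shorter edge $uv'$ at $u$. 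Both arguments are valid; the paper's is shorter and displays the tangled path explicitly, while yours buys the conclusion with interval-trapping (note that the containment of the $u$-$z$-path in the interval between $u$ and $v$, which you derive first, is not actually used --- only the containment of the $z$-$v$-path and the final application to $uv'$ matter).
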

 \begin{claimproof}
  For the sake of contradiction assume that both the leftmost vertex $u$ and the rightmost vertex $v$ in $T'$ are of degree at least $2$.
  If $u$ and $v$ are adjacent then the edge $uv$, another edge incident to $u$ and another edge incident $v$ form a tangled path since $\chi_\prec(T')= 2$, a contradiction.
  If $u$ and $v$ are not adjacent let $P$ denote the path in $T'$ connecting $u$ and $v$.
  It uses at most one of the edges incident to $u$.
  Then any other edge $zu$ incident to $u$ crosses the edge in $P$ that is incident to $v$ since $\chi_\prec(T')= 2$.
  Hence $zP$ forms a tangled path, a contradiction.
  This shows that at least one of $u$ or $v$ is a leaf in $T'$.
 \end{claimproof}

 By reversing $T'$ if necessary we assume that the leftmost vertex $u$ is a leaf in $T'$.
 The ordered tree $T'-u$ is monotonically alternating by induction and Lemma~\ref{lem:noNewInnerCut}. 
Consider  the partition $V(T')=L\dot\cup R$, with $L\prec R$ and $L$ and $R$ being independent sets. Such a partition  is unique since $T'$ is connected.
 Let $v$ be the neighbor of $u$ in $T'$.  Since $\chi_\prec(T')=2$, $v\in R$.
 Since $T'$ is connected, $k\geq 3$ and $u$ is leftmost in $T'$, the edge $uv$ is not the shortest edge incident to $v$.
 Hence $uv\not\in S(R)$ and therefore $S(R)$ has no crossing edges by induction.
 Clearly $uv\in S(L)$ since $uv$ is the only edge incident to $u$ and thus it is the shortest incident to $u$ edge.
 If $uv$ crosses some edge $xy$ in $T'$, $x\prec y$, then all vertices in the path connecting $v$ and $x$ are between $x$ and $v$ due to Lemma~\ref{lem:edgeCoversVertex} 
 applied to $x$, $y$ and $v$.
 Therefore $xy$ is not the shortest edge incident to $x$ and hence $xy\not\in S(L)$.
 This shows that $S(L)$ has no crossing edges and thus $T'$ is monotonically alternating.
 
 The other way round assume that each segment of an ordered tree $T$ is monotonically alternating.
 We need to show that each segment contains neither a bonnet nor a tangled path.
 Let $T'$ denote a segment of $T$, $V(T') = L\cup R$, $L\prec R$ and $E(T') = S(L) \cup S(R)$, so each edges is either a shortest  edge incident to a vertex in $R$ or 
 a shortest edge incident to a vertex in $L$.
 Then $\chi_\prec(T')\leq 2$ and hence $T'$ does not contain a bonnet.
 We will prove that $T'$ does not contain a tangled path by induction on $k=|V(T')|$.
 If $k\leq 3$, then there are no crossing edges in $T'$ and hence no tangled path.
 Suppose $k\geq 4$.

 Assume that the leftmost vertex $u$ and the rightmost vertex $w$ in $T'$ are of degree at least $2$.
 If $uw\in E(T')$ then $uw \not \in S(L)$ and $uw\not\in S(R)$, a contradiction.  So, $uw\not \in E(T')$. 
 Consider  the longest edge $xw$ incident to $w$. Then $x\neq u$ and since $xw\not \in S(R)$, $xw\in S(L)$.
 Then the shortest edge incident to $u$ crosses $xw$, a contradiction since $S(L)$ does not contain crossing edges.
 Hence the leftmost or the rightmost vertex is a leaf in $T'$.
 
 By reversing $T'$ if necessary we assume that the leftmost vertex $u$ is a leaf.
 We see that $T'-u$ is monotonically alternating, thus by induction it does not contain a tangled path.
 Hence if $T'$ has a tangled path $P$, then $P$ contains an edge $uv$ crossing some other edge in $P$, where $v$ is the neighbor of $u$ in $T'$. 
 Then the rightmost vertex $r$ in $P$ is of degree $2$ and to the right of $v$, since $P$ is tangled and $u$ is leftmost and of degree $1$ in $T'$.
 Let $x$ and $y$, $x\prec y$, be neighbors of $r$ in $P$.
 Then $xr$ is the shortest edge incident to $x$, since any shorter edge forms a tangled path with $r$ and $y$ in $T'-u$.
 This is a contradiction since $uv$ and $xr$ cross and $T'$ is monotonically alternating.
 Thus $T'$ has no tangled path.
 
 \bigskip
 
 Finally we prove the last statement of the theorem.
 If $H$ is a connected ordered graph with $f_\prec(H)\neq\infty$, then $H$ is a tree that contains neither a bonnet nor a tangled path due to Theorem~\ref{Tutte-Shift}.
 Hence each segment of $H$ is a monotonically alternating tree.
 \qed


\subsection{Proof of Theorem~\ref{non-crossing}}


Let $T$ be  a non-crossing ordered graph such that $f_\prec(T) \neq \infty$.
Then $T$ is acyclic, contains no tangled path and no bonnet by Theorem~\ref{Tutte-Shift}.
Hence $T$ is a non-crossing ordered forest with no bonnet. \\

On the other hand let $T$ be a non-crossing forest with no bonnet.
Recall that $f_\prec(H)\geq k-1$ for each ordered $k$-vertex graph $H$ because $K_{k-1}\in{\rm Forb}_\prec(H)$.
We shall prove that $f_\prec(T)\neq\infty$.
Let $k = |V(T)|$ and consider any ordered graph $G\in {\rm Forb}_\prec(T)$.
We will prove by induction on $k$ that $\chi(G)\leq 2^k$  and $\chi(G)\leq 2k-3$ if $T$ is a tree. 
 If $k = 2$, then clearly $\chi(G) = 1$.  So consider $k\geq 3$.\\

If $T$ is a tree, then each segment of $T$ is a monotonically alternating tree, by Theorem~\ref{structural}.
If there is only one segment in $T$, then $f_\prec(T) \leq 2k-3$ by Lemma~\ref{lem:ReducMonAlt}. 
If there is more than one segment in $T$, then there is an inner cut vertex splitting $T$ into two trees $T_1$ and $T_2$ that are clearly also non-crossing and contain no bonnet.
Thus by Reduction Lemma~\ref{red:innerCut} and induction we have  $f_\prec(T) \leq f_\prec(T_1) + f_\prec(T_2) \leq  2|V(T_1)| -3 + 2|V(T_2)| -3 = 2(|V(T)| +1) - 6 = 2k -4.$\\

If  $T$  is  a forest we consider several cases.
If $T$ has more than one segment, then there is an inner cut vertex splitting $T$ into two forests $T_1$ and $T_2$ that are clearly also non-crossing and contain no bonnet.
Thus by Reduction Lemma~\ref{red:innerCut}  and induction we have  $f_\prec (T) \leq f_\prec(T_1) + f_\prec(T_2) \leq  2^{|V(T_1)|} + 2^{|V(T_2)|} = 2^{t} + 2^{k+1-t} \leq 2^k$ with $t=|V(T_1)|\geq 2$. If $T$ has an isolated vertex $u$, then by Reduction Lemma~\ref{red:isolatedVertex}  and induction we have $f_\prec(T) \leq 2 f_\prec(T-u) \leq 2\cdot2^{k-1} = 2^k$.
Finally, if $T$ has no isolated vertices and exactly one segment, then consider the leftmost and rightmost vertices $u$ and $v$ of $T$.  Since $u$ and $v$ are not isolated in this case, 
and $T$ is non-crossing with no inner cut vertices,  $uv$ is an edge.  If $uv$ is isolated, then $k\geq 4$ (since there is no isolated vertex) and by Reduction Lemma~\ref{red:isolatedEdge} and induction we have $f_\prec(T) \leq 2\cdot f_\prec(T-\{u,v\})+1 \leq 2\cdot2^{k-2}+1 \leq 2^k.$
 If $uv$ is not isolated, then either $u$ or $v$, say $u$, is a leaf of $T$, since $T$ is non-crossing and does not contain a bonnet.
Let $xv$ denote the longest edge incident to $v$ in $T- u$. Note that $x$ exists since the edge $uv$ is not isolated.
Then there is no other vertex between $u$ and $x$, since such a vertex would be isolated in the non-crossing forest $T$ without bonnets.
Thus, $u$ is a reducible vertex, so by Reduction Lemma~\ref{red:outerReduction} and induction we have $f_\prec(T) \leq 2 f_\prec(T-u) \leq 2\cdot 2^{k-1} = 2^k$.\\

Next, we provide a $k$-vertex non-crossing tree with no bonnet such that $\infty\neq f_\prec(T)\geq k$.
Let $T$ be a monotonically alternating path on $k\geq 4$ vertices with leftmost vertex of degree $1$, as in Figure~\ref{fig:MoserSpindle} (right).
Further let $G$ denote a graph on vertices $u\prec x_1\prec\cdots\prec x_{k-2}\prec y_1\prec\cdots\prec y_{k-2}\prec x\prec y$ such that $xy$ is an edge and $\{u,x_1,\ldots,x_{k-2}\}$, $\{u,y_1,\ldots,y_{k-2}\}$, $\{x,x_1,\ldots,x_{k-2}\}$, and $\{y,y_1,\ldots,y_{k-2}\}$ induce complete graphs on $k-1$ vertices each.
See Figure~\ref{fig:MoserSpindle} (left).

We shall show that $G\in{\rm Forb}_\prec(T)$ and $\chi(G)\geq k$.
Consider a proper vertex coloring of $G$ using colors $1,\ldots,k-1$.
Without loss of generality $u$ has color $1$.
Then all colors $2,\ldots,k-1$ are used on the vertices $x_1,\ldots, x_{k-2}$ as well as on $y_1,\ldots, y_{k-2}$.
Hence both $x$ and $y$ are of color $1$, a contradiction.
Thus $\chi(G)\geq k$.

Assume that there is a copy $P$ of $T$ in $G$.
Let $v$ be the leftmost and $w$ be the rightmost vertex in $P$.
Note that $vw$ is an edge and that there are $k$ vertices between $v$ and $w$.
Therefore $vw$ is one of the edges $uy_i$, $1\leq i\leq k-2$, $x_jx$, $1\leq j\leq k-2$, or $y_1y$.
In the first case $V(P)\subseteq \{u,y_1,\ldots,y_{k-2}\}$, in the second case $V(P)\subseteq \{x_1,\ldots,x_{k-2},x\}$ and in the last case either $P=y_1,y,x$ or $V(P)\subseteq \{y,y_1,\ldots,y_{k-2}\}$.
Since $T$ has at least $4$ vertices, $P\neq y_1,y,x$.
So in any case $P$ has at most $k-1$ vertices, a contradiction since $T$ has $k$ vertices.
Hence $G\in{\rm Forb}_\prec(T)$.\\
 
 \begin{figure}
  \centering
  \includegraphics{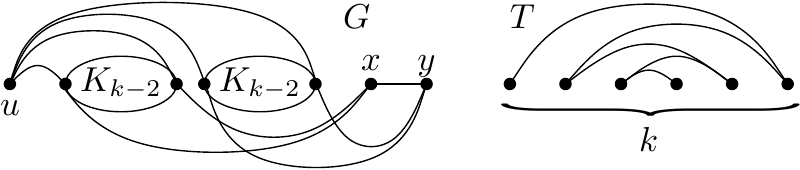}
  \caption{An ordered graph $G$ with chromatic number $k$ not containing a non-crossing and ordered tree $T$ on $k$ vertices without bonnets on the right, $k=6$.}
  \label{fig:MoserSpindle}
 \end{figure}

Finally it is easy to see that $f_\prec(T) = k-1$ for any ordered tree $T$ on at most $3$ vertices using Reduction Lemmas~\ref{red:innerCut} and~\ref{red:outerReduction}.\qed

 
\subsection{Proof of Theorem~\ref{others}}


\begin{itemize}
\item Let $T$ be an ordered forest on $k$ vertices where each segment is a generalized star, a $2$-nesting, or a $2$-crossing.
Let $T_1,\ldots,T_s$ denote the segments of $T$ and $k_i=|V(T_i)|$, $1\leq i\leq s$.
Let $T'$ be a segment of $T$.
If $T'$ is a generalized star on $k'$ vertices, then the center of the star is leftmost (or rightmost) in $T'$.
Let $G\in{\rm Forb}_\prec(T')$.
Then each vertex in $G$ has at most $k'-2$ neighbors to the right (or to the left).
Thus each such graph can be greedily colored from right to left (or left to right) with at most $k'-1$ colors.
This shows that $f_\prec(T') \leq |V(T')|-1$.
If $T'$ is a $2$-nesting, then $f_\prec(T')=3=|V(T')|-1$ due to~\cite{DW04} (Lemma 9).
If $T'$ is a $2$-crossing, then $f_\prec(T')=3=|V(T')|-1$, since any graph not containing $T'$ is outerplanar and outerplanar graphs have chromatic number at most $3$.
We apply Reduction Lemma~\ref{red:innerCut} and the results above which yield $f_\prec(T) \leq \sum_{i=1}^s f_\prec(T_i) \leq \sum_{i=1}^s (k_i-1) = k-1$.

\item Let $T$ be an ordered forest on $k$ vertices where each segment is a generalized star, a non-crossing tree without bonnets, a crossing or a nesting.
Let $T_1,\ldots,T_s$ denote the segments of $T$ and $k_i=|V(T_i)|\geq 2$.
Let $T'$ be a segment of $T$.
If $T'$ is a $k'$-nesting or a $k'$-crossing, $k'\geq 2$, then $f_\prec(T') \leq 4(k'-1)  \leq 2|V(T')|-3$ due to equation~(\ref{f-ex}), since any graph $G\in{\rm Forb}_\prec(T')$ contains less than $2(k'-1)|V(G)|$ edges due to Dujmovic and Wood~\cite{DW04} (for nestings), respectively Capoyleas and Pach~\cite{CapoyleasPach} (for crossings).
Further $f_\prec(T')\leq 2|V(T')|-3$ if $T'$ is a non-crossing tree without bonnets due to Theorem~\ref{non-crossing}.
Hence Reduction Lemma~\ref{red:innerCut} yields $f_\prec(T) \leq \sum_{i=1}^s f_\prec(T_i) \leq \sum_{i=1}^s (2k_i-3) \leq 2k-3$.

\item Let $T=M(t,m, \pi)$ for some positive integers $m$ and $t$ and a permutation $\pi$ of $[t]$.
If $t=1$, then $f_\prec(T) = m$ due to the results above, since $M(1,m, \pi)$ is a star on $m+1$ vertices.
Weidert~\cite{Weidert} proves that ${\rm ex}_\prec(n,M(t,1, \pi)) \leq {\rm ex}_\prec(n,M(t,2, \pi))\leq 11 t^4\binom{2t^2}{2t} n< t^4(2t^2)^{2t}n$ for any positive integer $t\geq 2$ and any permutation $\pi$ of $[t]$.
Moreover if $m\geq 2$, then $${\rm ex}_\prec(n,M(t,m, \pi))\leq 2^{t(m-2)}{\rm ex}_\prec(n,M(t,2, \pi))$$ due to a reduction by Tardos~\cite{Tardos}.  Therefore ${\rm ex}_\prec(n,M(t,m, \pi)) $ $< 2^{tm}t^{4+4t}n$.
Thus, using the fact that $|V(T)|=k=tm+t$ and equation (\ref{f-ex})  we have that $f_\prec(M(t,m, \pi)) \leq 2^{tm+9t\log(t)}\leq 2^{10 k\log k}$.

\item Conlon \textit{et al.}~\cite{ConlonFoxLeeSudakov} and independently Balko \textit{et al.}~\cite{BalkoCibulkaKralKyncl} prove that that there is a positive constant $c$ such that for any sufficiently large positive integer $k$ there is an ordered matchings on $k$ vertices with ordered Ramsey number at least $2^{c\frac{\log(k)^2}{\log\log(k)}}$.
If, for some ordered graph $H$, the edges of a complete ordered graph $G$ on $N=R_\prec(H)-1$ vertices are colored in two colors without monochromatic copies of $H$, then both color classes form ordered graphs $G_1$ and $G_2$ in ${\rm Forb}_\prec(H)$.
Then one of the $G_i$'s has chromatic number at least $\sqrt{N}$, since a product of proper colorings of $G_1$ and $G_2$ yields a proper coloring of $G$ using $\chi(G_1)\chi(G_2)\geq \chi(G)=N$ colors.
This shows that there is a positive constant $c'$ such that for all positive integers $k$ and ordered matchings $H$ on $k$ vertices with $f_\prec(H)\geq 2^{c'\frac{\log(k)^2}{\log\log(k)}}$.
\qed
\end{itemize}


 \section{Small Forests}\label{small-forests}

 \newcommand{\bast}{{\bf \textasteriskcentered}}
 \newcommand{\ord}{{\rm ord}}

Let $P_k$ denote a path on $k$ vertices, $M_k$ a matching on $k$ edges and $S_k$ a star with $k$ leaves (note that $M_1 = S_1 = P_2$ and $P_3=S_2$).
Further let $G+H$ denote the vertex disjoint union of graphs $G$ and $H$.
Then the set of all forests without isolated vertices and at most $3$ edges is given by
\[\{P_2, S_2, M_2, S_3, P_4, S_2+P_2, M_3\}.\]

Let $G$ denote a graph on $n$ vertices and $a$ automorphisms.
Then the number $\ord(G)$ of non-isomorphic orderings of $G$ equals $\ord(G) = \frac{n!}{a}$.
Hence 
\begin{align*}&\ord(P_2)=\tfrac{2!}{2}=1,\enskip \ord(S_2) =\tfrac{3!}{2}=3,\enskip \ord(M_2) =\tfrac{4!}{8}=3,\enskip \ord(S_3) =\tfrac{4!}{3!}=4,\\
&\ord(P_4) =\tfrac{4!}{2}=12,\enskip \ord(S_2+P_2) =\tfrac{5!}{2\cdot 2}=30,\enskip \ord(M_3) =\tfrac{6!}{6\cdot 4\cdot 2}=15.
\end{align*}

Recall that the reverse $\overline{T}$ of an ordered graph $T$ is the ordered graph obtained by reversing the ordering of the vertices in $T$.
Note that $f_\prec(T)=f_\prec(\overline{T})$ for any ordered graph $T$ since $G\in{\rm Forb}_\prec(T)$ if and only if $\overline{G}\in{\rm Forb}_\prec(\overline{T})$.
Table~\ref{fig:tableForests} shows all ordered forests $T$ without isolated vertices and at most $3$ edges and their $f_\prec$ values, where only one of $T$ and  $\overline{T}$ is listed.
So when $T$ and $\overline{T}$ are not isomorphic ordered graphs the entry in the table represents two graphs.
Such cases are marked with an~\bast.
For example there are only two instead of three entries for $S_2$ and similarly for the other graphs.

\begin{figure}
\begin{tabular}{r|c|cc|ccc|}
 {\bf T} & \includegraphics[scale=1]{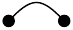} & \includegraphics[scale=1]{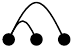} & \includegraphics[scale=1]{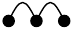} & \includegraphics[scale=1]{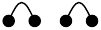} & \includegraphics[scale=1]{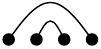} & \includegraphics[scale=1]{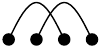}\\[1ex]
 $\mathbf{f_\prec(T)}$ & $1$ & $2$ \bast & $2$ & $3$ & $3$ & $3$\\
& (Thm. \ref{others}) & (Thm. \ref{others}) & (Thm. \ref{others}) & (Thm. \ref{others}) & (Thm. \ref{others}) & (Thm. \ref{others})
\end{tabular}
\vspace{1em}

\begin{tabular}{r|cc|ccc}
 {\bf T} & \includegraphics[scale=1]{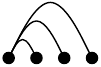} & \includegraphics[scale=1]{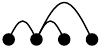} & \includegraphics[scale=1]{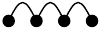} & \includegraphics[scale=1]{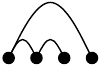} & \includegraphics[scale=1]{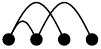} \\[1ex]
 $\mathbf{f_\prec(T)}$ & $3$ \bast & $3$ \bast & $3$ & $\infty$ \bast & $\infty$ \bast \\
& (Thm. \ref{others}) & (Thm. \ref{others}) & (Thm. \ref{others}) & (bonnet) & (tangled)
\end{tabular}

\vspace{1em}

\begin{tabular}{r|ccccc|}
 {\bf T} & \includegraphics[scale=1]{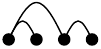} & \includegraphics[scale=1]{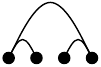} & \includegraphics[scale=1]{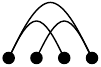} & \includegraphics[scale=1]{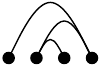} & \includegraphics[scale=1]{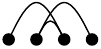} \\[1ex]
 $\mathbf{f_\prec(T)}$ & $3$ \bast & $\infty$ & $\infty$ & $4$ \bast & $\leq 4$ \\
& (Thm. \ref{others}) & (bonnet) & (tangled) & (Lem. \ref{lem:ReducMonAlt}, Fig.~\ref{fig:MoserSpindle}) & (Red. \ref{red:outerReduction})
\end{tabular}

\vspace{1em}

\begin{tabular}{r|ccccc}
 {\bf T} & \includegraphics[scale=1]{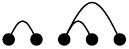} & \includegraphics[scale=1]{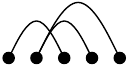} & \includegraphics[scale=1]{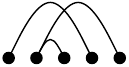} & \includegraphics[scale=1]{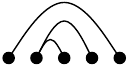} & \includegraphics[scale=1]{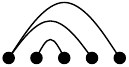} \\[1ex]
 $\mathbf{f_\prec(T)}$ & $4$ \bast & $\leq 6$ \bast & ? \bast & $\leq 6$ \bast & $\leq 6$ \bast \\
& (Thm. \ref{others}) &(Red. \ref{red:outerReduction}) &  & (Red. \ref{red:isolatedEdge}) & (Lem. \ref{lem:ReducMonAlt})
\end{tabular}

\vspace{1em}

\begin{tabular}{r|ccccc}
 {\bf T} & \includegraphics[scale=1]{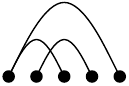} & \includegraphics[scale=1]{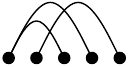} & \includegraphics[scale=1]{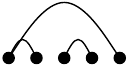} & \includegraphics[scale=1]{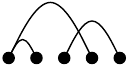} & \includegraphics[scale=1]{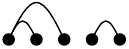} \\[1ex]
 $\mathbf{f_\prec(T)}$ & ? \bast & $\neq\infty$ \bast & $\infty$ \bast & ? \bast & $4$ \bast \\
&  & (Thm. \ref{others}) & (bonnet) &   & (Thm. \ref{others})
\end{tabular}

\vspace{1em}

\begin{tabular}{r|cccccc|}
 {\bf T} & \includegraphics[scale=1]{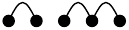} & \includegraphics[scale=1]{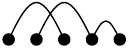} & \includegraphics[scale=1]{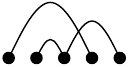} & \includegraphics[scale=1]{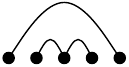} & \includegraphics[scale=1]{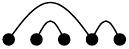} & \includegraphics[scale=1]{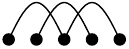}\\[1ex]
 $\mathbf{f_\prec(T)}$ & $4$ \bast & $4$ \bast & ? \bast & $\leq 6$ & $4$ \bast & ?\\
& (Thm. \ref{others})  & (Thm. \ref{others}) &  & (Red. \ref{red:isolatedEdge})  & (Thm. \ref{others}) &
\end{tabular}

\vspace{1em}

\begin{tabular}{r|ccccc}
 {\bf T} & \includegraphics[scale=1]{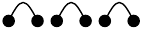} & \includegraphics[scale=1]{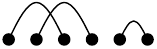} & \includegraphics[scale=1]{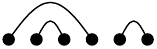} & \includegraphics[scale=1]{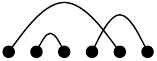} & \includegraphics[scale=1]{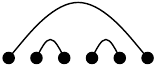} \\[1ex]
 $\mathbf{f_\prec(T)}$ & $5$ & $5$ \bast & $5$ \bast & $\leq 9$ \bast & $\leq 7$ \\
& (Thm. \ref{others})  &  (Thm. \ref{others}) & (Thm. \ref{others}) & (Red. \ref{red:MatchingReduction})  & (Red. \ref{red:isolatedEdge})
\end{tabular}

\vspace{1em}

\begin{tabular}{r|cccccc}
 {\bf T} & \includegraphics[scale=1]{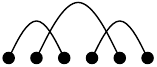} & \includegraphics[scale=1]{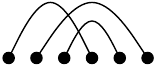} & \includegraphics[scale=1]{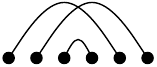} & \includegraphics[scale=1]{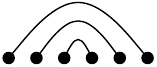} & \includegraphics[scale=1]{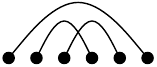} & \includegraphics[scale=1]{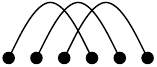}\\[1ex]
 $\mathbf{f_\prec(T)}$ & ? & $\neq\infty$ \bast & $\leq 9$ & $\leq 8$ & $\leq 7$ & $\leq 8$\\
&   & (Thm. \ref{others}) & (Red. \ref{red:MatchingReduction}) & (Thm. \ref{others})  & (Red. \ref{red:isolatedEdge}) & (Thm. \ref{others})
\end{tabular}

\vspace{1em}

\caption{All ordered forests $T$ on at most $3$ edges without isolated vertices and their~$f_\prec$~value.}
\label{fig:tableForests}
\end{figure}


 \section{Conclusions}\label{conclusions} 
In this paper, we consider the function $f_\prec(H) = \sup\{\chi(G)\mid G\in {\rm Forb}_\prec(H)\}$ for ordered graphs $H$ on at least $2$ vertices.
We prove that in contrast to unordered and directed graphs, $f_\prec(H)=\infty$ for some ordered forests $H$.
To this end we explicitly describe several infinite classes of minimal ordered forests $H$ with $f_\prec(H) = \infty$.
A full answer to the following question remains open.

\begin{question}\label{ques:generalCharacterization}
 For which ordered forests $H$ does $f_\prec(H) = \infty$ hold?
\end{question}

We completely answer Question~\ref{ques:generalCharacterization} for non-crossing ordered graphs $H$.
Suppose that $H$ is a non-crossing ordered $k$-vertex graph with $f_\prec(H)\neq\infty$.
We prove that, if $H$ connected, then $k-1\leq f_\prec(H)\leq 2k-3$ and, if $H$ is disconnected, then $k-1\leq f_\prec(H)\leq 2^k$.
In addition, we give infinite classes of graphs for which $f_\prec(H)= |V(H)|-1$, as well as infinite classes of graphs for which $|V(H)|\leq f_\prec(H) \neq \infty$.
Note that we do not know whether $f_\prec(H)\neq \infty$ for the matchings in the last statement of Theorem~\ref{others}.
For crossing connected ordered graphs, we reduce Question~\ref{ques:generalCharacterization} to monotonically alternating trees:

\begin{question}\label{ques:monAlt}
For which monotonically alternating trees $H$ does $f_\prec(H) = \infty$ hold?
\end{question}

We do not have an answer to Question~\ref{ques:monAlt} even for some monotonically alternating paths.
A smallest unknown such path is $u_5u_1u_3u_2u_4$, where $u_1\prec \cdots \prec u_5$.
See Figure~\ref{fig:unknown} (left).
The situation becomes even more unclear for crossing disconnected graphs.
We do not know the value of $f_\prec(H)$ for some ordered matchings $H$.
A smallest such matching has edges $u_1u_3$, $u_2u_5$ and $u_4u_6$ where $u_1\prec\ldots\prec u_6$.
See Figure~\ref{fig:unknown} (right).
\begin{figure}
 \begin{minipage}{0.48\textwidth}
 \centering
  \includegraphics{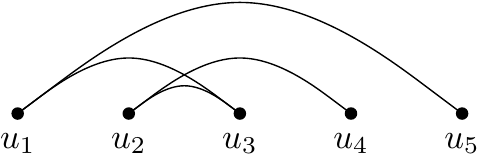}
 \end{minipage}
\hfill
\begin{minipage}{0.48\textwidth}
\centering
  \includegraphics{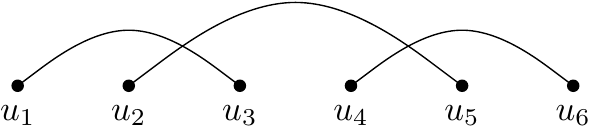}
 \end{minipage}
 \caption{Ordered graphs $H$ for which we don't know whether $f_\prec(H)=\infty$.}
 \label{fig:unknown}
\end{figure}
Note that Reduction Lemmas~\ref{red:innerCut},~\ref{red:isolatedVertex},~\ref{red:isolatedEdge} and~\ref{red:outerReduction} apply to crossing ordered graph as well.
We find a more precise version of  Reduction Lemma~\ref{red:isolatedVertex} and other types of reductions, similar to reductions for matrices in~\cite{Tardos}, but none of these lead to significantly better upper bounds in Theorems~\ref{non-crossing} and~\ref{others} or a new class of forests with finite $f_\prec$. The following question remains open, even when restricted to non-crossing graphs.
 
\begin{question} For $k\geq 4$, what is the value of the function
\[f_\prec(k) = \max \{ f_\prec(H)\mid ~~  |V(H)|=k,~  f_\prec(H)\neq \infty\}?\]
\end{question}

\bibliographystyle{abbrv}
\bibliography{lit}
\end{document}